
\pdfoutput=1

\documentclass[reqno,a4paper,final]{amsart}

\usepackage[utf8]{inputenc}
\usepackage[T1]{fontenc}
\usepackage[english]{babel}
\usepackage{ifthen}
\usepackage{enumitem}
\usepackage{dsfont}
\usepackage{mathtools}
\usepackage{amssymb}
\usepackage[babel]{csquotes}
\usepackage[backend=biber,style=alphabetic]{biblatex}
\usepackage[protrusion=true,expansion=true,babel=true,final]{microtype}
\usepackage[unicode,bookmarksopen]{hyperref}

\numberwithin{equation}{section} 

\newtheorem{lemma}{Lemma}[section]
\newtheorem{corollary}[lemma]{Corollary}
\newtheorem{proposition}[lemma]{Proposition}
\newtheorem{theorem}[lemma]{Theorem}

\theoremstyle{definition}
\newtheorem{definition}[lemma]{Definition}
\newtheorem{example}[lemma]{Example}
\newtheorem{remark}[lemma]{Remark}

\newlist{thm_enum}{enumerate}{1}
\setlist[thm_enum]{label=\normalfont(\alph*)}
\newlist{def_enum}{enumerate}{1}
\setlist[def_enum]{label=\normalfont(\roman*)}
\newlist{equiv_enum}{enumerate}{1}
\setlist[equiv_enum]{label=\normalfont(\roman*)}

\newcommand{\IN}{\mathbb{N}}
\newcommand{\IR}{\mathbb{R}}
\newcommand{\IC}{\mathbb{C}}
\newcommand{\IE}{\mathbb{E}}

\newcommand{\IP}{\mathbb{P}}

\newcommand{\abs}[1]{\left\lvert#1\right\rvert}
\newcommand{\normalabs}[1]{\lvert#1\rvert}

\newcommand{\norm}[1]{\left\lVert#1\right\rVert}
\newcommand{\normalnorm}[1]{\lVert#1\rVert}
\newcommand{\biggnorm}[1]{\biggl\lVert#1\biggr\rVert}

\newcommand{\R}[2][\empty]{
	\ifthenelse{\equal{#1}{\empty}}
		{\mathcal{R}\left\{#2\right\}}
		{\mathcal{R}_{#1}\left\{#2\right\}}
}

\makeatletter
\newcommand{\LeftEqNo}{\let\veqno\@@leqno}
\makeatother

\renewcommand{\d}{\mathop{}\!d}
\renewcommand{\Re}{\operatorname{Re}}

\renewcommand{\epsilon}{\varepsilon}

\let\temp\phi
\let\phi\varphi
\let\varphi\temp

\DeclareMathOperator{\Id}{Id}

\DeclareMathOperator{\Rad}{Rad}

\DeclareMathOperator{\divv}{div}

\DeclareMathOperator{\VMO}{VMO}
\DeclareMathOperator{\BMO}{BMO}
\DeclareMathOperator{\BUC}{BUC}

\allowdisplaybreaks[4]

\addbibresource{Literatursammlung.bib}

\DeclareSourcemap{
  \maps[datatype=bibtex, overwrite]{
    \map{
      \step[fieldset=abstract, null]
      \step[fieldsource=entrykey,match=Mer99,final] %
      \step[fieldset=shorthand,fieldvalue=LeM99] %
    }
  }
}

\renewbibmacro{publisher+location+date}{%
	\printlist{publisher}
	\setunit*{\addcomma\space}
	\printlist{location}
  	\setunit*{\addcomma\space}
  	\usebibmacro{date}
\newunit} 

\newbibmacro{string+doi+url}[1]{%
	\iffieldundef{doi}{%
			\iffieldundef{url}{#1}{\href{\thefield{url}}{#1}}%
		}%
		{\href{http://dx.doi.org/\thefield{doi}}{#1}}
}%

\renewbibmacro{in:}{}

\ExecuteBibliographyOptions{doi=false,isbn=false,url=false}

\DeclareFieldFormat*{title}{\usebibmacro{string+doi+url}{\mkbibemph{#1}}}
\DeclareFieldFormat*{booktitle}{#1}
\DeclareFieldFormat[article]{volume}{\textbf{#1}\addcomma}
\DeclareFieldFormat[article]{number}{\addspace no.~#1}
\DeclareFieldFormat[article]{pages}{#1}
\DeclareFieldFormat{journaltitle}{#1} 
\DeclareFieldFormat{url}{}
\DeclareFieldFormat{eprint}{Preprint on arXiv: \href{http://arxiv.org/abs/#1}{#1}}

\begin{document}

\title[Maximal Regularity and Fractional Sobolev Regularity]{Non-Autonomous Maximal $L^p$-Regularity under Fractional Sobolev Regularity in Time}

\begin{abstract}
	We prove non-autonomous maximal $L^p$-regularity results on UMD spaces replacing the common Hölder assumption by a weaker fractional Sobolev regularity in time. This generalizes recent Hilbert space results by Dier and Zacher. In particular, on $L^q(\Omega)$ we obtain maximal $L^p$-regularity for $p \ge 2$ and elliptic operators in divergence form with uniform $\VMO$-modulus in space and $W^{\alpha,p}$-regularity for $\alpha > \frac{1}{2}$ in time.
\end{abstract}

\author{Stephan Fackler}
\address{Institute of Applied Analysis, Ulm University, Helmholtzstr.\ 18, 89069 Ulm}
\email{stephan.fackler@uni-ulm.de}
\thanks{This work was supported by the DFG grant AR 134/4-1 ``Regularität evolutionärer Probleme mittels Harmonischer Analyse und Operatortheorie''.}
\keywords{non-autonomous maximal regularity, parabolic equations in divergence form, quasi-linear parabolic problems}
\subjclass[2010]{Primary 35B65; Secondary 35K10, 35B45, 47D06.}

\maketitle

\section{Introduction}

	In this work we improve some known results on maximal $L^p$-regularity of non-autonomous abstract Cauchy problems with time-dependent domains of the form
	\begin{equation*}
		\LeftEqNo
		\label{eq:nacp}\tag{NACP}
		\left\{
		\begin{aligned}
			\dot{u}(t) + A(t)u(t) & = f(t) \\
			u(0) & = u_0.
		\end{aligned}
		\right.
	\end{equation*}
	In particular, we obtain new stronger results if the operators $A(t)$ are elliptic operators in divergence form. For a family $(A(t))_{t \in [0,T]}$ of linear operators on some Banach space $X$ the problem~\eqref{eq:nacp} has \emph{maximal $L^p$-regularity} if for all $f \in L^p([0,T];X)$ and all initial values $u_0$ in the real interpolation space $(D(A(0)),X)_{1/p,p}$  there exists a unique solution $u \in L^p([0,T];X)$ satisfying $u(t) \in D(A(t))$ for almost all $t \in [0,T]$ as well as $\dot{u}, A(\cdot)u(\cdot) \in L^p([0,T];X)$ and if there exists $C > 0$ such that the maximal regularity estimate
	\begin{equation*}
		\norm{u}_{W^{1,p}([0,T];X} + \norm{A(\cdot)u(\cdot)}_{L^p([0,T];X)} \le C (\norm{f}_{L^p([0,T];X)} + \norm{u_0}_{(D(A(0)),X)_{1/p,p}})
	\end{equation*}
	holds. Observe that $W^{1,p}([0,T];X) \hookrightarrow C([0,T];X)$ and therefore the initial condition makes sense. Maximal regularity results have profound applications to non-linear parabolic problems as we will exemplify in Section~\ref{sec:applications}.
	
	We now give a summary of the previously known results on maximal $L^p$-regularity. The autonomous case $A(t) = A$ is well understood. Here, maximal $L^p$-regularity holds for one $p \in (1, \infty)$ if and only if it holds for all $p \in (1, \infty)$. Further, maximal $L^p$-regularity for $u_0 = 0$ implies maximal $L^p$-regularity for all $u_0 \in D(A(0),X)_{1/p,p}$. On Hilbert spaces an operator $A$ has maximal regularity if and only if $-A$ generates an analytic semigroup. In non-Hilbert spaces, not every generator of an analytic semigroup has maximal regularity, see~\cite{KalLan00} or~\cite{Fac14}. Here, an additional $\mathcal{R}$-boundedness assumption is needed. We refer to~\cite{DHP03} and~\cite{KunWei04} for details.
	
	Let us come to the non-autonomous case. Here the best understood setting is that of non-autonomous forms on Hilbert spaces. For this let $V, H$ be two Hilbert spaces with a dense embedding $V \hookrightarrow H$. A mapping $a\colon [0,T] \times V \times V \to \IC$ is called a \emph{coercive, bounded sesquilinear form} if $a(t,\cdot,\cdot)$ is sesquilinear for all $t \in [0,T]$ and if there exist $\epsilon, M > 0$ such that for all $u,v \in V$
	\begin{align*}
		\Re a(t,u,u) & \ge \epsilon \norm{u}_V^2, \\
		\abs{a(t,u,v)} & \le M \norm{u}_V \norm{v}_V.
	\end{align*}
	This induces operators $\mathcal{A}(t)\colon V \to V'$. We denote the parts in $H$ by $A(t)$. It has been shown in~\cite{HaaOuh15} that the operators $(A(t))_{t \in [0,T]}$ satisfy maximal $L^p$-regularity for all $p \in (1,\infty)$ if $t \mapsto \mathcal{A}(t)$ is $\alpha$-Hölder continuous for $\alpha > \frac{1}{2}$. For maximal $L^2$-regularity this has been improved to the fractional Sobolev regularity $t \mapsto \mathcal{A}(t) \in \dot{W}^{\alpha,2}([0,T];\mathcal{B}(V,V'))$ for $\alpha > \frac{1}{2}$~\cite{DieZac16}. If one consider elliptic operators
	\begin{equation*}
		L(t) = -\divv(A(t)\nabla \cdot)
	\end{equation*}
	for coefficients $A(t) = (a_{ij}(t))$, this translates into the regularity of the mappings $t \mapsto a_{ij}(t,\cdot) \in L^{\infty}$, i.e.\ $t \mapsto a_{ij}(t,\cdot) \in \dot{W}^{\alpha,2}([0,T];L^{\infty})$. The less regularity one needs here, the more applicable the results are to non-linear questions in form of a priori estimates. In the special case of elliptic operators in divergence form some more refined results are available, see~\cite{AusEge16} and~\cite{Fac16d}. However, all results have in common that one needs some differentiability in time of order at least $\frac{1}{2}$. This is no coincidence. Recent counterexamples to Lions' problem by the author~\cite{Fac16c} show that maximal $L^p$-regularity can fail if $t \mapsto C^{1/2}(\mathcal{B}(V,V'))$. However, dealing with non-linear problems one needs some form of Sobolev embedding to carry out the usual iteration procedure. In higher dimensional cases maximal regularity on $X = L^2$ is too weak for the embeddings to hold. Therefore one is interested in maximal regularity on $X = L^q$ for $q$ big enough.
	
	Non-autonomous maximal $L^p$-regularity on Banach spaces is far more involved. The classical works for time-dependent domains are~\cite{HieMon00} and~\cite{HieMon00b}. Although the general method used there is applicable on Banach spaces, maximal $L^p$-regularity was first only obtained on Hilbert spaces in a non-form setting \cite{HieMon00} and in~\cite{HieMon00b} extrapolated to $X = L^q$ for elliptic operators  assuming $a_{ij} \in C^{1/2}([0,T];C^1(\overline{\Omega}))$. A true generalization of this approach to Banach spaces was obtained in~\cite{PorStr06} using the emerging concept of $\mathcal{R}$-boundedness. Already the results in~\cite{HieMon00b} indicate a fundamental new issue in the non-Hilbert space setting. Whereas on $L^2$ the coefficients only need to be measurable in space, on $L^q$ all known results require some regularity in space. Recently, the author lowered the needed regularity in space and showed maximal $L^p$-regularity on $L^q$ for elliptic operators in divergence form if the coefficients have a uniform $\VMO$-modulus~\cite{Fac15c}.
	
	The aim of this work is to generalize both the results in~\cite{DieZac16} and~\cite{Fac15c}. We show maximal $L^p$-regularity on Banach spaces assuming fractional Sobolev regularity as in~\cite{DieZac16}. The obtained results are even new in the Hilbert space case as~\cite{DieZac16} fully relies on Hilbert space methods and therefore only deasl with the case $p = 2$. In the case of elliptic operators in divergence form we require the coefficients apart from the $\VMO$ assumption to be in $\dot{W}^{\frac{1}{\alpha},p}([0,T]; L^{\infty})$ for some $\alpha > \frac{1}{2}$. This lowers the regularity needed in time for the treatment of non-linear problems and is the first improvement of the time regularity on general Banach spaces since the classical work~\cite{AcqTer87}. Since we establish maximal $L^p$-regularity for elliptic operators on $L^q(\Omega)$ for $q > 2$, we obtain existence results for \emph{strong} solutions of quasilinear parabolic equations in divergence form. Such results cannot be obtained with maximal regularity results on Hilbert spaces. We further show that our results are optimal in the sense that in general we can not relax the regularity to some $\alpha \le \frac{1}{2}$.
	
	Note that in contrast elliptic operators in non-divergence form have time independent domains and one can therefore obtain maximal $L^p$-regularity only assuming the time dependence to be measurable, see for example~\cite{GalVer14},~\cite{DonKim16} and the references therein for recent results. However, note that in correspondence with our results one still needs something like $\VMO$-regularity in space.
	
	This work is structured as follows. In the first sections we work towards an abstract maximal regularity result proven in Theorem~\ref{thm:mr}. As a consequence, we obtain in Theorem~\ref{thm:mr_elliptic} the stated result for elliptic operators. Section~\ref{sec:applications} uses this result to establish strong solutions of quasilinear elliptic equations. We discuss the optimality of our results in Section~\ref{sec:optimality}.
	
\section{The Fundamental Identity}
	
	Using ideas established by Acquistapace and Terreni in~\cite{AcqTer87} and previous works, we show in this section that maximal $L^p$-regularity solutions of~\eqref{eq:nacp} satisfy a certain integral equation. It turns out that this equation is better approachable with analytic tools. We recall some basic definitions first. For $\phi \in (0, \pi)$ we denote by $\Sigma_{\phi} \coloneqq \{ z \in \IC \setminus \{ 0 \}: \abs{\arg z} < \phi \}$ the sector of angle $\phi$.
	
	\begin{definition}\label{def:sectorial}
		A linear operator $A\colon D(A) \to X$ on some Banach space $X$ is called \emph{sectorial of angle $\phi$} if the spectrum $\sigma(A)$ of $A$ is contained in $\overline{\Sigma_{\phi}}$ and
		\begin{equation*}
			\sup_{\lambda \not\in \overline{\Sigma_{\phi}}} (\abs{\lambda} + 1) \norm{R(\lambda,A)} < \infty.
		\end{equation*}
		A family of linear operators $A_i\colon D(A_i) \to X$ for $i \in I$ is \emph{uniformly sectorial} if $\sigma(A_i) \subset \overline{\Sigma_{\phi}}$ for all $i \in I$ and if there exists $C > 0$ with
		\begin{equation*}
			\sup_{\lambda \not\in \overline{\Sigma_{\phi}}} (\abs{\lambda} + 1) \norm{R(\lambda,A_i)} \le C \qquad \text{for all } i \in I.
		\end{equation*}
	\end{definition}
	
	Recall that a closed operator $A$ is sectorial if and only if $-A$ generates an exponentially stable analytic semigroup. In particular, $A^{-1}$ is invertible.
	
	In the following we need interpolation and extrapolation spaces associated to a sectorial operator $A$ on some Banach space, a fully developed theory carefully presented in~\cite{Ama95}. We only discuss spaces associated to the complex interpolation method $[\cdot,\cdot]_{\theta}$. The results to be obtained hold for several other, but not all, scales of interpolation and extrapolation spaces. As a unified treatment would lead to a more abstract presentation, we focus on one particular setting.
	
	We define $X_{1,A} = D(A)$ endowed with the norm $x \mapsto \norm{Ax}$ and $X_{-1,A}$ as the completion of $X$ with respect to the norm $x \mapsto A^{-1}x$. For $\theta \in (0,1)$ we further let $X_{\theta,A} = [X, X_{1,A}]_{\theta}$ and $X_{-\theta,A} = [X,X_{-1,A}]_{\theta}$. The operator $A\colon X_{1,A} \to X$ and its extension $A_{-1}\colon X \to X_{-1,A}$ are isometries. By interpolation, for $\theta \in (0,1)$ the part $A_{-\theta}$ of $A$ in $X_{-\theta,A}$ is an isometry $A_{-\theta}\colon X_{1-\theta,A} \to X_{-\theta,A}$. The operator $A_{-1}$ is sectorial on $X_{-1,A}$ with $\rho(A_{-1}) = \rho(A)$ and satisfies the same sectorial estimates as $A$. By interpolation, the same holds for the operators $A_{-\theta}$ on $X_{-\theta,A}$. Considering duality, if $X$ is reflexive, one has $(X_{\theta,A})' \simeq X'_{-\theta,A}$ and $(A_{\theta})' = A'_{-\theta}$ with respect to the pairing induced by $\langle \cdot, \cdot, \rangle_{X,X'}$. Extrapolation spaces allow us to define a weaker notion of solution for~\eqref{eq:nacp}. 
	
	\begin{proposition}\label{prop:fundamental_identity}
		Let $(A(t))_{t \in [0,T]}$ for $T > 0$ be uniformly sectorial operators on some Banach space $X$. If $u$ is a maximal $L^p$-regularity solution of~\eqref{eq:nacp} for the initial value $u_0 = 0$, then for every fixed $t \in [0,T]$ one has in $X_{-1,A(t)}$
		\begin{equation}
			\label{eq:splitting}
			\begin{split}
    			u(t) & = \int_{0}^t e^{-(t-s)A_{-1}(t)} (A_{-1}(t) - A(s))u(s) \d s + \int_{0}^t e^{-(t-s)A(t)} f(s) \d s \\
    			& \eqqcolon \int_{0}^t K_1(t,s)u(s) \d s + \int_{0}^t K_2(t,s) f(s) \d s \eqqcolon (S_1u)(t) + (S_2f)(t). %
			\end{split}
		\end{equation}
	\end{proposition}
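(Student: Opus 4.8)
The plan is to start from the definition of a maximal $L^p$-regularity solution $u$ of~\eqref{eq:nacp} with $u_0 = 0$, so that $\dot u(s) + A(s) u(s) = f(s)$ holds for almost every $s \in [0,T]$, with $u(s) \in D(A(s))$ a.e. Fix $t \in [0,T]$. The natural approach is to view the equation on the fixed "frozen'' space $X_{-1,A(t)}$: rewrite $A(s)u(s) = A_{-1}(t)u(s) - (A_{-1}(t) - A(s))u(s)$, which is legitimate because $u(s) \in D(A(s)) \subseteq X \hookrightarrow X_{-1,A(t)}$ and $A_{-1}(t)$ is defined on all of $X$. This turns the evolution equation into the inhomogeneous autonomous problem
\begin{equation*}
	\dot u(s) + A_{-1}(t) u(s) = f(s) + (A_{-1}(t) - A(s))u(s) \eqqcolon g(s)
\end{equation*}
in $X_{-1,A(t)}$, with $u(0) = 0$.

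The second step is to apply the variation of constants formula for the generator $-A_{-1}(t)$ of the analytic semigroup $(e^{-rA_{-1}(t)})_{r \ge 0}$ on $X_{-1,A(t)}$. One has to justify that the right-hand side $g$ lies in $L^p([0,T];X_{-1,A(t)})$: the term $f$ does since $X \hookrightarrow X_{-1,A(t)}$, and $(A_{-1}(t) - A(s))u(s) = A_{-1}(t)u(s) - A(s)u(s)$, where $A(s)u(s) \in L^p([0,T];X) \hookrightarrow L^p([0,T];X_{-1,A(t)})$ by the maximal regularity estimate, and $A_{-1}(t)u(s) = A_{-1}(t)A(s)^{-1}(A(s)u(s))$ with $A_{-1}(t)A(s)^{-1}$ bounded on $X$ uniformly in $s$ by uniform sectoriality (its norm is controlled by the resolvent bounds). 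Hence $g \in L^p([0,T];X_{-1,A(t)})$ and $u$, extended to $X_{-1,A(t)}$, solves the autonomous Cauchy problem with this right-hand side; by uniqueness of the mild solution,
\begin{equation*}
	u(t) = \int_0^t e^{-(t-s)A_{-1}(t)} g(s) \d s = \int_0^t e^{-(t-s)A_{-1}(t)}(A_{-1}(t) - A(s))u(s)\d s + \int_0^t e^{-(t-s)A_{-1}(t)} f(s)\d s.
\end{equation*}
In the last integral, since $f(s) \in X$, the semigroup $e^{-(t-s)A_{-1}(t)}$ applied to $f(s)$ agrees with $e^{-(t-s)A(t)}f(s)$, so this term can be written as $\int_0^t e^{-(t-s)A(t)}f(s)\d s$ as claimed, giving exactly~\eqref{eq:splitting}.

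I expect the main obstacle to be the careful bookkeeping around the extrapolation space $X_{-1,A(t)}$: one must check that all the manipulations — embedding $u(s)$, $A(s)u(s)$, $f(s)$ into $X_{-1,A(t)}$, integrability of $g$, identifying the mild solution of the autonomous problem with $u$ — are valid at the level of the weaker $X_{-1,A(t)}$-topology, and in particular that $u$ is indeed (absolutely) continuous into $X_{-1,A(t)}$ and solves $\dot u + A_{-1}(t)u = g$ there in the appropriate weak sense. The key quantitative input is that $A_{-1}(t)A(s)^{-1}$ is uniformly bounded on $X$, which follows from the uniform sectorial estimates (both $A(t)$ and $A(s)$ are invertible with uniformly bounded inverses, and $A_{-1}(t)$ restricted to $D(A(s))$ factors through $X$), and this is what makes the operator $K_1(t,s)$ well-defined as a map into $X_{-1,A(t)}$. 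Everything else is the standard autonomous variation-of-constants argument applied on the fixed space $X_{-1,A(t)}$.
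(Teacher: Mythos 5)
Your proposal is correct and follows essentially the same route as the paper: you freeze the operator at time $t$, rewrite the equation on the extrapolation space $X_{-1,A(t)}$ as $\dot u + A_{-1}(t)u = f + (A_{-1}(t)-A(\cdot))u$, and invoke variation of constants. The paper simply unpacks the variation-of-constants formula directly (differentiating $v(s)=e^{-(t-s)A(t)}u(s)$ and applying the fundamental theorem of calculus in $X_{-1,A(t)}$) rather than citing it, but this is the same argument.
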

	\begin{proof}
		Fix $t \in (0,T)$. Consider $v\colon [0,t] \to X$ given by $v(s) = e^{-(t-s)A(t)} u(s)$. Then $v$ is differentiable in $X$ almost everywhere and for almost every $s \in (0,t)$ we have
		\begin{equation*}
			\begin{split}
				\dot{v}(s) & = A(t)e^{-(t-s)A(t)}u(s) + e^{-(t-s)A(t)} \dot{u}(s) \\
				& = e^{-(t-s)A_{-1}(t)} (A_{-1}(t) - A(s))u(s) + e^{-(t-s)A(t)} f(s).
			\end{split}
		\end{equation*}
		Notice that $(A_{-1}(t) - A(s))u(s) \in X_{-1,A(t)}$ for almost every $s \in (0,T)$. Hence, integrating both sides in $X_{-1,A(t)}$, we get by the fundamental theorem of calculus
		\begin{equation*}
			v(t) = v(0) + \int_{0}^t \dot{v}(s) \d s.
		\end{equation*}
		Inserting the explicit terms for $v$ and $\dot{v}$ and using $u_0 = 0$ yields equation~\eqref{eq:splitting}.
	\end{proof}
	
\section{Existence and Uniqueness of Integrated Solutions}

	In this section we show that under certain assumptions a unique solution of~\eqref{eq:splitting} exists. The crucial assumption we make from now on is that on a certain extrapolation space the domains of the operators get independent. For concrete differential operators endowed with some boundary condition this is usually satisfied.

	\begin{definition}
    	For $\theta \in [0,1]$ a family $(A(t))_{t \in [0,T]}$ of sectorial operators on some Banach space $X$ is called \emph{$\theta$-stable} if there exists a Banach space $X_{\theta,A}$ and $K \ge 0$ such that for all $t \in [0,T]$ the spaces $X_{\theta,A(t)}$ and $X_{\theta,A}$ agree as vector spaces and
    	\begin{equation}
			\label{eq:theta_stable}
    		K^{-1} \norm{x}_{\theta,A} \le \norm{x}_{\theta,A(t)} \le K \norm{x}_{\theta,A} \qquad \text{for all } x \in X_{\theta,A}
    	\end{equation}
		and if the same also holds for some space $X_{\theta-1,A}$ and all spaces $X_{\theta-1,A(t)}$.
	\end{definition}
	
	Note that $(A(t))_{t \in [0,T]}$ is $1$-stable if and only if the domains $D(A(t))$ agree for all $t \in [0,T]$ and are uniformly equivalent. %
	Since we will be faced with operator-valued singular operators, we rely on tools from vector-valued harmonic analysis. It is by now well-understood that the classical multiplier results only hold in the vector-valued setting if one makes additional assumptions both on the Banach space and the multiplier. This leads to UMD spaces and the concept of $\mathcal{R}$-boundedness. 
	
	\begin{definition}
		A Banach space $X$ is called a \emph{UMD space} if for one or by Hörmander's condition all $p \in (1, \infty)$ the vector-valued Hilbert transform
		\begin{equation*}
			(Hf)(x) = \lim_{\epsilon \downarrow 0} \int_{\abs{t} \ge \epsilon} \frac{f(x-t)}{t} \d t
		\end{equation*}
		initially defined on $C^{\infty}(\IR^n;X)$ extends to a bounded operator $L^p(\IR;X) \to L^p(\IR;X)$.
	\end{definition}
	
	For our purposes it is sufficient to know that Hilbert and $L^p$-spaces for $p \in (1, \infty)$ are UMD spaces. For detailed information on UMD spaces we refer to~\cite{Fra86} and~\cite{Bur01}, whereas more on $\mathcal{R}$-boundedness can be found in~\cite{DHP03} and~\cite{KunWei04}.
	
	\begin{definition}
		Let $X$ and $Y$ be Banach spaces. A subset $\mathcal{T} \subseteq \mathcal{B}(X,Y)$ is called \emph{$\mathcal{R}$-bounded} if there exists a constant $C \ge 0$ such that for all $n \in \IN$, $T_1, \ldots, T_n \in \mathcal{T}$, $x_1, \ldots, x_n$ and all independent identically distributed random variables $\epsilon_1, \ldots, \epsilon_n \in X$ on some probability space $(\Omega, \Sigma, \IE)$ with $\IP(\epsilon_k = \pm 1) = \frac{1}{2}$ one has
		\begin{equation*}
			\IE \biggnorm{\sum_{k=1}^n \epsilon_k T_k x_k}_Y \le C \IE \biggnorm{\sum_{k=1}^n \epsilon_k x_k}_X.
		\end{equation*}
		The smallest constant $C \ge 0$ for which this holds is denoted by $\mathcal{R}(\mathcal{T})$. Further, we define $\Rad X$ as the closure of all finite sums of the form $\sum_{k=1}^n \epsilon_k x_k$ in $L^1(\Omega,\Sigma,\IE)$.
	\end{definition}
	
	We write $\mathcal{R}^{X \to Y}$ to indicate between which spaces the mapping is considered if it is not clear from the context. Every $\mathcal{R}$-bounded set is bounded in $\mathcal{B}(X,Y)$. If both $X = Y$ are Hilbert spaces, then the converse holds as well. Further, Kahane's contraction principle sates that $\{ z \Id: \abs{z} \le 1 \}$ has $\mathcal{R}$-bound at most $2$ on every Banach space. By a celebrated theorem of Weis~\cite{Wei01}, on a UMD space the autonomous problem $A(t) = A$ has maximal $L^p$-regularity for one are all $p \in (1, \infty)$ if and only if $A$ is an $\mathcal{R}$-sectorial operator up to shifts.
	
	\begin{definition}\label{def:r_sectorial}
		A linear operator $A\colon D(A) \to X$ on some Banach space $X$ is called \emph{$\mathcal{R}$-sectorial of angle $\phi$} if the spectrum $\sigma(A)$ of $A$ is contained in $\overline{\Sigma_{\phi}}$ and
		\begin{equation*}
			\R{ (\abs{\lambda} + 1) R(\lambda,A): \lambda \not\in \overline{\Sigma_{\phi}}} < \infty.
		\end{equation*}
		A family of linear operators $A_i\colon D(A_i) \to X$ for $i \in I$ is \emph{uniformly $\mathcal{R}$-sectorial} if $\sigma(A_i) \subset \overline{\Sigma_{\phi}}$ for all $i \in I$ and if there exists $C > 0$ with
		\begin{equation*}
			\R{ (\abs{\lambda} + 1) R(\lambda,A_i): \lambda \not\in \overline{\Sigma_{\phi}}} \le C \qquad \text{for all } i \in I.
		\end{equation*}
	\end{definition}
	
	By interpolation one obtains corresponding $\mathcal{R}$-boundedness estimates on the induced extrapolation spaces. The following result is not new~\cite[Lemma~6.9]{HHK06}, we give a proof for the sake of completeness. For its proof we use the fact that for an interpolation couple $(X,Y)$ of UMD spaces we have~\cite[Proposition~3.14]{KaiSaa12}
	\begin{equation}
		\label{eq:rad_interpolation}
		[\Rad(X),\Rad(Y)]_{\theta} = \Rad([X,Y]_{\theta})
	\end{equation}
	
	\begin{lemma}\label{lem:fractional_r}
		Let $A\colon D(A) \to X$ be an $\mathcal{R}$-sectorial operator on a UMD space $X$. Then for all $\theta_2 > \theta_1 \in [-1,1]$ with $\theta_2 - \theta_1 \le 1$ one has with $\phi$ as in Definition~\ref{def:r_sectorial} and with constants independently of the choice of $A$
		\begin{align*}
			\MoveEqLeft \mathcal{R}^{X_{\theta_1,A} \to X_{\theta_2,A}} \{ (1+\abs{\lambda})^{1-(\theta_2 - \theta_1)} R(\lambda,A): \lambda \not\in \overline{\Sigma_{\phi}} \} \\
			& \lesssim \R{ (\abs{\lambda} + 1) \norm{R(\lambda,A)}: \lambda \not\in \overline{\Sigma_{\phi}}}.
		\end{align*}
	\end{lemma}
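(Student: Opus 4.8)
The plan is to prove this by interpolation between the two endpoint cases, where the shift $(1+\abs{\lambda})^{1-(\theta_2-\theta_1)}$ is chosen precisely to make the scaling work out uniformly in $\lambda$. First I would record the two elementary cases. When $\theta_2-\theta_1=0$ (so $\theta_1=\theta_2=:\theta$), the claim reduces to $\mathcal{R}$-sectoriality of $A_\theta$ on $X_{\theta,A}$, which follows from the $\mathcal{R}$-sectoriality of $A$ on $X$ by interpolation of the set $\{(1+\abs{\lambda})R(\lambda,A)\}$ using the stability of $\mathcal{R}$-bounds under the complex method; this is exactly the "$\mathcal{R}$-boundedness estimates on the induced extrapolation spaces" mentioned in the text, combined with \eqref{eq:rad_interpolation}. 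When $\theta_2-\theta_1=1$, say $\theta_1=\beta-1$ and $\theta_2=\beta$ for $\beta\in[0,1]$, the operator $A_{\beta-1}\colon X_{\beta,A}\to X_{\beta-1,A}$ is an isometry, hence $R(\lambda,A)=A^{-1}(\mathrm{Id}+\lambda R(\lambda,A))$ maps $X_{\beta-1,A}$ to $X_{\beta,A}$ with $\mathcal{R}^{X_{\beta-1,A}\to X_{\beta,A}}\{R(\lambda,A)\}\lesssim 1+\mathcal{R}\{\lambda R(\lambda,A)\}$, and no shift is needed since $1-(\theta_2-\theta_1)=0$.

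Next I would interpolate. Fix $\theta_1<\theta_2$ with $d:=\theta_2-\theta_1\in(0,1)$ and write $d=1\cdot d+0\cdot(1-d)$, choosing the two endpoint configurations $(\theta_1,\theta_1+1)$ — truncated/shifted so it stays in $[-1,1]$ — and $(\theta_1,\theta_1)$, so that $\theta_2=(1-d)\theta_1+d(\theta_1+1)$. (One must be mildly careful near the boundary of $[-1,1]$; since $\theta_2-\theta_1\le 1$ and both lie in $[-1,1]$ the pair $(\theta_1,\theta_1+d)$ is always obtained as the $d$-interpolant of $(\theta_1,\theta_1)$ and $(\theta_1,\min\{\theta_1+1,1\})$ after a harmless reindexing, or alternatively one interpolates symmetrically from $(\theta_2-1,\theta_2)$ and $(\theta_2,\theta_2)$ — I would pick whichever keeps both endpoints in range.) For a fixed $\lambda\notin\overline{\Sigma_\phi}$ the operator $R(\lambda,A)$ is bounded $X_{\theta_1,A}\to X_{\theta_1,A}$ with $\mathcal{R}$-bound $\lesssim (1+\abs{\lambda})^{-1}C$ and bounded $X_{\theta_1,A}\to X_{\theta_1+1,A}$ with $\mathcal{R}$-bound $\lesssim C$ (from the two endpoint cases). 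Then by the interpolation identity \eqref{eq:rad_interpolation} for the $\Rad$-spaces of the complex method, applied to the families $\{\epsilon_k\otimes R(\lambda,A)x_k\}$, the set $\{R(\lambda,A)\}$ viewed as operators $X_{\theta_1,A}\to[X_{\theta_1,A},X_{\theta_1+1,A}]_d=X_{\theta_2,A}$ has $\mathcal{R}$-bound $\lesssim C\cdot\big((1+\abs{\lambda})^{-1}\big)^{1-d}\cdot 1^{d}=C(1+\abs{\lambda})^{-(1-d)}$, i.e. $\mathcal{R}\{(1+\abs{\lambda})^{1-d}R(\lambda,A)\}\lesssim C$ uniformly in $\lambda$, which is the assertion.

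The one technical point that needs care — and the place where I expect the only real work — is justifying the interpolation step at the level of $\mathcal{R}$-bounds rather than operator norms: namely that if a single operator $T$ is $\mathcal{R}$-bounded from $Z$ into $Y_0$ with bound $M_0$ and from $Z$ into $Y_1$ with bound $M_1$, then it is $\mathcal{R}$-bounded from $Z$ into $[Y_0,Y_1]_\theta$ with bound $\lesssim M_0^{1-\theta}M_1^\theta$. This is precisely where \eqref{eq:rad_interpolation} enters: the operator $\mathrm{Id}_{\Rad}\otimes T$ (acting coordinatewise) is bounded $\Rad(Z)\to\Rad(Y_i)$ with norm $M_i$, and complex interpolation of this single bounded operator between the couples $(\Rad(Z),\Rad(Z))$ and $(\Rad(Y_0),\Rad(Y_1))$ gives the bound $M_0^{1-\theta}M_1^\theta$ on $\Rad(Z)\to[\Rad(Y_0),\Rad(Y_1)]_\theta=\Rad([Y_0,Y_1]_\theta)$, which unwinds to the desired $\mathcal{R}$-bound. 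The uniformity in $A$ is automatic because every constant produced is either absolute or a fixed power of the single $\mathcal{R}$-sectoriality constant $C$, and the complex-interpolation constants for the $\Rad$-spaces of UMD spaces depend only on $\theta$ and the UMD constants of $X$ (hence of the $X_{\theta_i,A}$, which are uniformly equivalent to fixed spaces), not on $A$ itself.
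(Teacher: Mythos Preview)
There is a genuine gap in your interpolation step. You fix a single $\lambda$ and interpolate the one operator $R(\lambda,A)$ between the targets $X_{\theta_1,A}$ and $X_{\theta_1+1,A}$, obtaining the norm bound $\normalnorm{R(\lambda,A)}_{\mathcal{B}(X_{\theta_1,A},X_{\theta_2,A})} \lesssim (1+\abs{\lambda})^{-(1-d)}$; equivalently, the $\mathcal{R}$-bound of each \emph{singleton} $\{(1+\abs{\lambda})^{1-d} R(\lambda,A)\}$ is $\lesssim C$. But the lemma asks for the $\mathcal{R}$-bound of the whole family $\{(1+\abs{\lambda})^{1-d} R(\lambda,A): \lambda \notin \overline{\Sigma_\phi}\}$, and on a non-Hilbert UMD space uniform boundedness does not imply $\mathcal{R}$-boundedness. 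Your last paragraph confirms this reading: the operator you interpolate is $\Id_{\Rad} \otimes T$ with a fixed $T = R(\lambda,A)$, which applies the \emph{same} resolvent in every Rademacher coordinate. If instead one tries to take the diagonal map $\sum_k \epsilon_k x_k \mapsto \sum_k \epsilon_k R(\lambda_k,A) x_k$ with varying $\lambda_k$, the two endpoint bounds are for two \emph{different} diagonal maps (one carrying the weights $(1+\abs{\lambda_k})$, the other not), and ordinary complex interpolation of a single operator cannot mediate between them.

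The paper closes exactly this gap by replacing complex interpolation with Stein interpolation for analytic families: for fixed $\lambda_1,\ldots,\lambda_n$ and $x_1,\ldots,x_n$ one defines $\mathcal{T}_z(\sum_k \epsilon_k x_k) = \sum_k \epsilon_k (1+\lambda_k)^{z} R(\lambda_k,-A) x_k$ on the strip $\{0 \le \Re z \le 1\}$. Kahane's contraction principle controls the scalar factors $(1+\lambda_k)^{it}$ and yields the bounds for $\mathcal{T}_{it}$ into $\Rad(X_{\theta_1,A})$ and $\mathcal{T}_{1+it}$ into $\Rad(X_{\theta_1+1,A})$ from your two endpoint cases, with admissible growth $e^{\abs{t}\phi}$ in $t$; the generalized Stein interpolation theorem combined with~\eqref{eq:rad_interpolation} then gives $\mathcal{T}_{\alpha}\colon \Rad(X_{\theta_1,A}) \to \Rad(X_{\theta_1+\alpha,A})$, which is precisely the desired $\mathcal{R}$-bound. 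Your endpoint analysis is fine; only the interpolation device must be upgraded.
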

	\begin{proof}
		The assertion holds for $\theta_1 = \theta_2 \in \{-1,1\}$. By complex interpolation and~\eqref{eq:rad_interpolation} this extends to $\theta_1 = \theta_2 \in [-1,1]$. Since $AR(\lambda,A) = \lambda R(\lambda,A) - \Id$, one has for all $\theta_1 \in [-1,0]$
		\begin{equation*}
			\mathcal{R}^{X_{\theta_1,A} \to X_{\theta_1+1,A}} \{ R(\lambda,A): \lambda \not\in \overline{\Sigma_{\phi}} \} < \infty.
		\end{equation*}
		For the case of general $\theta_2$ with $\theta_2 - \theta_1 \le 1$ consider for given $n \in \IN$, $\lambda_1, \ldots, \lambda_n \not\in -\overline{\Sigma_{\phi}}$ and $x_1, \ldots, x_n \in X$ the mapping $\mathcal{S} = \{ z \in \IC: \Re z \in [0,1] \} \to \Rad(X_{\theta_1,A}) + \Rad(X_{\theta_1+1,A})$ given by
		\begin{equation*}
			\mathcal{T}_z\colon \sum_{k=1}^{n} \epsilon_k x_k \mapsto \sum_{k=1}^{n} \epsilon_k (1 + \lambda_k)^{z} R(\lambda_k,-A)x_k.
		\end{equation*}
		The mapping $z \mapsto \mathcal{T}_z$ is continuous on $\mathcal{S}$ and analytic in the interior of $\mathcal{S}$ and it follows from Kahane's contraction principle that the norms of $\mathcal{T}_{it}$ and $\mathcal{T}_{1+it}$ as operators in $\mathcal{B}(\Rad(X_{\theta_1,A}), \Rad(X_{\theta_1,A}))$ and $\mathcal{B}(\Rad(X_{\theta_1,A}), \Rad(X_{\theta_1+1,A}))$ are bounded by $e^{\abs{t} \phi}$ up to a uniform constant. Hence, it follows from the generalized Stein interpolation theorem~\cite{Voi92} and~\eqref{eq:rad_interpolation} that
		\begin{equation*}
			\mathcal{T}_{\alpha}\colon \Rad(X_{\theta_1,A}) \to \Rad(X_{\theta_1+\alpha,A}). \qedhere
		\end{equation*}
	\end{proof}
	
	\begin{remark}
		Curiously, the above result fails for the negative Laplacian and the real interpolation method~\cite[Example~6.13]{HHK06}. Hence, this is one step where one cannot work with arbitrary extrapolation spaces.
	\end{remark}
	
	We establish the existence of a unique solution of~\eqref{eq:splitting} assuming Hölder regularity.
	
	\begin{definition}
		A function $f\colon [0,T] \to X$ with values in some Banach space $X$ is \emph{$\alpha$-Hölder continuous} for $\alpha \in (0,1]$ if $\norm{f(t) - f(s)} \le C \abs{t-s}^{\alpha}$ for some $C \ge 0$ and all $t,s \in [0,T]$. We denote by $C^{\alpha}([0,T];X)$ the space of all such functions.
	\end{definition}
	
	We are now ready to prove the existence of integrated solutions.
	
	\begin{proposition}\label{prop:wnacp}
		For $T > 0$ and $\theta \in (0,1]$ let $(A(t))_{t \in [0,T]}$ be a $\theta$-stable family of uniformly $\mathcal{R}$-sectorial operators on some UMD space $X$. Suppose there exist $\alpha \in (0,1]$ with $A_{-1} \in C^{\alpha}([0,T];\mathcal{B}(X_{\theta,A}, X_{\theta-1,A}))$. Then for all $p \in (1, \infty)$ and $f \in L^p([0,T];X)$ there exists a unique solution $u$ of the integral equation~\eqref{eq:splitting} in $L^p([0,T];X_{\theta,A})$. Further, one has $u \in W^{1,p}([0,T];X_{\theta-1,A}) \cap L^p([0,T];X_{\theta,A})$ and
    	\begin{equation*}
    		\LeftEqNo
    		\label{eq:wnacp}\tag{WNACP}
    		\left\{
    		\begin{aligned}
    			\dot{u}(t) + A_{\theta-1}(t)u(t) & = f(t) \\
    			u(0) & = 0.
    		\end{aligned}
    		\right.
    	\end{equation*}
	\end{proposition}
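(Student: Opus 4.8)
The plan is to set up the integral equation~\eqref{eq:splitting} as a fixed point problem on $L^p([0,T];X_{\theta,A})$ and to show that the operator $S_1$ becomes a contraction on a short time interval, from which existence and uniqueness on $[0,T]$ follows by a standard concatenation argument. First I would check that $S_2$ maps $L^p([0,T];X)$ boundedly into $L^p([0,T];X_{\theta,A})$: by $\theta$-stability it suffices to estimate $s \mapsto \int_0^t e^{-(t-s)A(t)}f(s)\,ds$ in $X_{\theta,A(t)}$, and this is exactly a mixed-derivative / maximal-regularity estimate for the single operator $A(t)$, uniform in $t$ because the family is uniformly $\mathcal{R}$-sectorial on a UMD space (Weis' theorem, together with the $\mathcal{R}$-boundedness on extrapolation spaces from Lemma~\ref{lem:fractional_r}). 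In fact the cleanest route is to view $S_2 f$ as the real part of the maximal regularity solution: $v(t) = \int_0^t e^{-(t-s)A(t)}f(s)\,ds$, but since $A(t)$ varies one uses instead the representation via $A_{-1}(t)$ and absorbs the variation into $S_1$, so only the \emph{autonomous} estimate for each frozen $A(t)$ is needed here.

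The heart of the argument is the mapping property and smallness of $S_1$. For $u \in L^p([0,T];X_{\theta,A})$ write, in $X_{\theta-1,A(t)}$,
\begin{equation*}
	(S_1 u)(t) = \int_0^t e^{-(t-s)A_{-1}(t)}\bigl(A_{-1}(t) - A_{-1}(s)\bigr)u(s)\,ds,
\end{equation*}
and split the difference operator as $(A_{-1}(t) - A_{-1}(s)) = (A_{-1}(t) - A_{-1}(s)) \in \mathcal{B}(X_{\theta,A},X_{\theta-1,A})$ with norm $\le C|t-s|^\alpha$ by the Hölder hypothesis. By Lemma~\ref{lem:fractional_r}, the family $\{(1+|\lambda|)^{1-(\theta-(\theta-1))}R(\lambda,A(t))\} = \{R(\lambda,A(t))\}$ viewed as operators $X_{\theta-1,A} \to X_{\theta,A}$ is uniformly $\mathcal{R}$-bounded, hence the semigroups $e^{-rA(t)}$ gain one degree of smoothing with the estimate $\mathcal{R}\{r^{1}e^{-rA(t)}: r>0\} \lesssim 1$ on the relevant scale; more precisely one gets $\|e^{-(t-s)A(t)}\|_{X_{\theta-1,A}\to X_{\theta,A}} \lesssim (t-s)^{-1}$. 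Combining, the scalar kernel controlling $\|(S_1u)(t)\|_{\theta,A}$ is $\lesssim |t-s|^{\alpha - 1}$, which is integrable since $\alpha > 0$. To get $L^p \to L^p$ boundedness with the right constant I would not just use Young's inequality (that gives boundedness on $[0,T]$ but not smallness); instead I would invoke the operator-valued Fourier multiplier / singular integral machinery on $L^p(\IR;\Rad(\cdots))$ — extending the kernel by zero — and use that the $\mathcal{R}$-bound of $\{(t-s)^{\alpha}e^{-(t-s)A(t)}(1+|\lambda|)\text{-rescaled resolvents}\}$ together with the UMD property yields $L^p$-boundedness of $S_1$ with operator norm $\lesssim C\,\delta^{\alpha}$ when restricted to an interval of length $\delta$. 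Choosing $\delta$ small makes $\|S_1\|_{\mathcal{B}(L^p([0,\delta];X_{\theta,A}))} < 1$.

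With the contraction in hand, on $[0,\delta]$ the equation $u = S_1 u + S_2 f$ has a unique solution $u_1 \in L^p([0,\delta];X_{\theta,A})$ via the Neumann series $u_1 = \sum_{n\ge 0} S_1^n(S_2f)$. One then restarts on $[\delta,2\delta]$: the integral from $0$ to $\delta$ of the kernel involving $u_1$ is a fixed $L^p([\delta,2\delta];X)$-datum (here one checks that the ``already known'' part $\int_0^\delta K_1(t,s)u_1(s)\,ds$ lies in the correct space, again using the smoothing of the semigroup off the diagonal, which is now genuinely bounded since $t-s \ge t-\delta \ge 0$ stays away from the singularity only partially — but the $|t-s|^{\alpha-1}$ kernel is still integrable), so the same contraction argument applies with the same $\delta$. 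After finitely many steps one covers $[0,T]$ and obtains a unique $u \in L^p([0,T];X_{\theta,A})$. Finally, to identify $u$ as the solution of~\eqref{eq:wnacp} I would differentiate the integral equation: writing $u(t) = (S_1u)(t) + (S_2f)(t)$ and using $\frac{d}{dt}\int_0^t e^{-(t-s)A(t)}g(s)\,ds = g(t) - A_{-1}(t)\int_0^t e^{-(t-s)A_{-1}(t)}g(s)\,ds$ (valid in $X_{\theta-1,A}$), the two derivative terms combine so that $\dot u(t) = f(t) - A_{\theta-1}(t)u(t)$ in $X_{\theta-1,A}$, with $u(0)=0$ since both integrals vanish at $t=0$; the regularity $\dot u \in L^p([0,T];X_{\theta-1,A})$ then follows because $A_{\theta-1}(\cdot)u(\cdot) \in L^p([0,T];X_{\theta-1,A})$ (as $A_{\theta-1}(t): X_{\theta,A} \to X_{\theta-1,A}$ is uniformly bounded) and $f \in L^p \hookrightarrow L^p([0,T];X_{\theta-1,A})$.

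The main obstacle I anticipate is making the smallness of $\|S_1\|$ rigorous on short intervals: one must argue that $S_1$ restricted to $L^p([a,a+\delta];X_{\theta,A})$ — with the kernel only supported on $0 < t-s < \delta$ — is governed by an $\mathcal{R}$-bounded operator family whose $L^p$-norm scales like $\delta^{\alpha}$, which requires carefully combining the Hölder gain $|t-s|^\alpha$, the $\mathcal{R}$-sectorial smoothing $|t-s|^{-1}$ on the extrapolation scale, and a vector-valued Mikhlin/Young-type estimate with \emph{explicit dependence on the support length}. A clean way to do this is a scaling/dilation argument reducing to a fixed interval and tracking the $\delta^{\alpha}$ prefactor. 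The $\theta$-stability hypothesis is what makes all of this uniform: it lets us replace the $t$-dependent norms $\|\cdot\|_{\theta,A(t)}$ and $\|\cdot\|_{\theta-1,A(t)}$ by the fixed norms of $X_{\theta,A}$ and $X_{\theta-1,A}$ throughout, up to the constant $K$.
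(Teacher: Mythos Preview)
Your kernel estimates and contraction-on-short-intervals strategy match the paper's, but you overcomplicate the smallness of $S_1$: Young's inequality with the kernel $|t-s|^{\alpha-1}$ on an interval of length $\delta$ gives directly $\|S_1\|_{\mathcal{B}(L^p([0,\delta];X_{\theta,A}))} \le \int_0^\delta s^{\alpha-1}\,ds = \alpha^{-1}\delta^\alpha$, which \emph{is} small for small $\delta$---no multiplier machinery is needed, and this is exactly what the paper does. Likewise $S_2\colon L^p([0,T];X) \to L^p([0,T];X_{\theta,A})$ follows from Young and the pointwise bound $\|e^{-rA(t)}\|_{X \to X_{\theta,A}} \lesssim r^{-\theta}$ (Lemma~\ref{lem:fractional_r}); no appeal to autonomous maximal regularity is required.

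The genuine gap is your identification step. The formula $\frac{d}{dt}\int_0^t e^{-(t-s)A(t)}g(s)\,ds = g(t) - A_{-1}(t)\int_0^t e^{-(t-s)A_{-1}(t)}g(s)\,ds$ is only valid for constant $A$: since the generator itself depends on $t$, differentiating $e^{-(t-s)A(t)}$ produces an additional term from $\partial_{t_0} e^{-(t-s)A(t_0)}|_{t_0=t}$, and there is no reason the extra contributions from $S_1$ and $S_2$ cancel. The paper avoids this by reversing the logic. Because $t \mapsto A_{\theta-1}(t) \in \mathcal{B}(X_{\theta,A},X_{\theta-1,A})$ is continuous, the problem~\eqref{eq:wnacp} on $X_{\theta-1,A}$ has maximal $L^p$-regularity by standard perturbation results for continuous time dependence (Pr\"uss--Schnaubelt, Arendt et al.), producing a solution $w \in W^{1,p}([0,T];X_{\theta-1,A}) \cap L^p([0,T];X_{\theta,A})$. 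One then repeats the computation of Proposition~\ref{prop:fundamental_identity} in $X_{\theta-1,A}$---the product rule for $s \mapsto e^{-(t-s)A(t)}w(s)$ with $t$ \emph{fixed}, where no derivative of $A(\cdot)$ appears---to see that $w$ satisfies~\eqref{eq:splitting}, and the uniqueness you established then forces $w = u$.
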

	\begin{proof}
		First note that by the uniform sectorial estimates and the properties of extrapolation spaces we have the uniform estimate
		\begin{align*}
			\normalnorm{e^{-(t-s)A_{-1}(t)}}_{\mathcal{B}(X_{\theta-1,A},X_{\theta,A})} \lesssim \abs{t-s}^{-1}.
		\end{align*}
		Using this together with the assumed Hölder regularity on $A_{-1}(\cdot)$ we get
		\begin{equation}
			\label{eq:k_1}
			\norm{K_1(t,s)}_{\mathcal{B}(X_{\theta,A},X_{\theta,A})} \lesssim \abs{t-s}^{\alpha - 1}.
		\end{equation}
		By Young's inequality for convolutions we then have the norm estimate
		\begin{align*}
			\norm{S_1 u}_{L^p([0,T];X_{\theta, A})} \le \int_0^T s^{\alpha - 1} \d s \norm{u}_{L^p([0,T];X_{\theta,A})} = \alpha^{-1} T^{\alpha} \norm{u}_{L^p([0,T];X_{\theta,A})}.
		\end{align*}
		Let us show the uniqueness of solutions of~\eqref{eq:splitting} in $L^p([0,T];X_{\theta,A})$. Since the equation is linear, it suffices to consider a solution with $u = S_1 u$. Now, for sufficiently small $T_0$ we have $\norm{S_1} < 1$. Hence, $\Id - S_1$ is invertible and consequently $u_{|[0,T_0]} = 0$. Using this information we see that~\eqref{eq:splitting} for $t > T_0$ reduces to
		\begin{equation*}
			u(t) = \int_{T_0}^T e^{-(t-s)A_{-1}(t)} (A_{-1}(t) - A_{-1}(s))u(s) \d s.
		\end{equation*}
		By the same argument as before we see that the operator defined by the left hand side is bounded on $L^p([T_0,2T_0];X_{\theta,A)}$. Hence, $u_{|[T_0,2T_0]} = 0$. Iterating this argument finitely many timesgives $u = 0$.
				
		Further, observe that for the kernel of $S_2$ one has by Lemma~\ref{lem:fractional_r} the estimate
		\begin{equation}
			\label{eq:k_2}
			\norm{K_2(t,s)}_{\mathcal{B}(X, X_{\theta,A(t)})} = \normalnorm{e^{-(t-s)A(t)}}_{\mathcal{B}(X, X_{\theta,A(t)})} \lesssim \abs{t-s}^{-\theta}.
		\end{equation}
		It follows from Young's inequality that $S_2 \in \mathcal{B}(L^p([0,T];X),L^p([0,T];{X_{\theta,A}}))$. 
		Since $t \mapsto A_{\theta-1} \in \mathcal{B}(X_{\theta,A}, X_{\theta-1,A})$ is a fortiori continuous, it follows from perturbation arguments and Lemma~\ref{lem:fractional_r} that~\eqref{eq:wnacp} has non-autonomous maximal $L^p$-regularity for all $p \in (1, \infty)$, see~\cite[Theorem~2.5]{PruSch01} or~\cite[Theorem~2.7]{ACFP07}. Hence, there exists a unique $w \in W^{1,p}([0,T];X_{\theta-1,A}) \cap L^p([0,T];X_{\theta,A})$ satisfying~\eqref{eq:wnacp}. Using the same argument as in Proposition~\ref{prop:fundamental_identity} we see that $w$ satisfies~\eqref{eq:splitting}. By the uniqueness shown in the first part we have $w = u$.
	\end{proof}
	
	\section{Bootstrapping Regularity}
	
	Assuming Hölder regularity, we now improve the regularity of integrated solutions.
		
	\begin{proposition}\label{prop:bootstrapping}
		For $T > 0$ and $\theta \in (0,1]$ let $(A(t))_{t \in [0,T]}$ be a $\theta$-stable family of uniformly sectorial operators on some Banach space $X$ satisfying $A \in C^{\alpha}([0,T];\mathcal{B}(X_{\theta,A}, X_{\theta-1,A}))$ for some $\alpha \in (0,1]$. 
    	If either
		\begin{thm_enum}
			\item $p \in (\frac{1}{1-\theta},\infty)$ and $q \in (1, \infty]$, or
			\item $p = \frac{1}{1-\theta}$ and $q \in (1,\infty)$, or
			\item $p \in (1, \frac{1}{1-\theta})$ and $q \in (1, \frac{p}{1-p(1-\theta)}]$,
		\end{thm_enum}
		then there exists $C_{pq} > 0$ depending only on $T$, $K$ in~\eqref{eq:theta_stable} and the constants of the sectorial and Höler estimates such that for all solutions $u \in L^p([0,T];X_{\theta,A})$ of~\eqref{eq:splitting}
    	\begin{equation*}
    		\norm{u}_{L^q([0,T];X_{\theta,A})} \le C_{pq} \norm{u}_{L^p([0,T];X_{\theta,A})}.
    	\end{equation*} 
	\end{proposition}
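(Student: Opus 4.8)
The plan is to read~\eqref{eq:splitting} as the equation $(\Id-S_1)u=S_2f$ in $L^r([0,T];X_{\theta,A})$ and to combine the Volterra structure of $S_1$ with the smoothing of $S_2$; both are already encoded in~\eqref{eq:k_1} and~\eqref{eq:k_2}, which stay valid here because the moment inequality for a sectorial operator alone gives $\normalnorm{e^{-\sigma A_{-1}(t)}}_{\mathcal{B}(X_{\theta-1,A},X_{\theta,A})}\lesssim\sigma^{-1}$ and $\normalnorm{e^{-\sigma A(t)}}_{\mathcal{B}(X,X_{\theta,A})}\lesssim\sigma^{-\theta}$ (no $\mathcal{R}$-boundedness required). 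Since $S_1$ has the scalar kernel majorant $\abs{t-s}^{\alpha-1}\in L^1([0,T])$, it acts boundedly on $L^r([0,T];X_{\theta,A})$ for every $r\in[1,\infty]$, and its $n$-fold iterate has kernel $\lesssim\Gamma(\alpha)^n\Gamma(n\alpha)^{-1}\abs{t-s}^{n\alpha-1}$ (Beta integrals); hence $\sum_{n\ge0}\normalnorm{S_1^n}_{\mathcal{B}(L^r(X_{\theta,A}))}$ converges uniformly in $r$ and $\Id-S_1$ is invertible on every such space with inverse bounded independently of $r$. It thus suffices to prove $S_2f=u-S_1u\in L^q([0,T];X_{\theta,A})$ with norm controlled by $\norm{u}_{L^p(X_{\theta,A})}$.

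For the smoothing, $S_2$ has the scalar kernel majorant $\abs{t-s}^{-\theta}$, which on $[0,T]$ lies in $L^{\rho}$ for $\rho<\tfrac1\theta$ and in weak-$L^{1/\theta}$. Young's inequality then gives $S_2\colon L^p([0,T];X)\to L^q([0,T];X_{\theta,A})$ whenever $1+\tfrac1q>\tfrac1p+\theta$, which is exactly the interior of the admissible region: it covers case~(a), where even $q=\infty$ is allowed since $\abs{t-s}^{-\theta}\in L^{p'}([0,T])$ precisely when $p>\tfrac1{1-\theta}$, and case~(b). The remaining borderline exponent $\tfrac1q=\tfrac1p-(1-\theta)$ in case~(c) is precisely the Hardy–Littlewood–Sobolev (weak Young) range. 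These are all scalar convolution estimates applied to $X$-valued functions, so no geometric hypothesis on $X$ enters.

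To assemble the estimate with $\norm{u}_{L^p(X_{\theta,A})}$ on the right, one exploits that $S_1$ also improves integrability: the weak-type bound $\abs{t-s}^{\alpha-1}\in L^{1/(1-\alpha),\infty}$ yields $S_1\colon L^r(X_{\theta,A})\to L^{r'}(X_{\theta,A})$ with $\tfrac1{r'}=(\tfrac1r-\alpha)_+$. Starting from $u\in L^p(X_{\theta,A})$ and feeding the current bound back into~\eqref{eq:splitting}, the $S_1u$-term gains $\alpha$ in the Lebesgue exponent at each step while $S_2f$ always lands in the target space by the second paragraph; after a number of steps bounded in terms of $\alpha,p,q,\theta$ one reaches $L^q$, and collecting the finitely many constants, together with the constants from~\eqref{eq:k_1}, \eqref{eq:k_2}, $K$ and the sectorial and Hölder estimates, produces $C_{pq}$ of the asserted type.

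The step I expect to be the main obstacle is the borderline case~(c): there Young's inequality is unavailable and one must invoke the Hardy–Littlewood–Sobolev inequality, with care that its constant is independent of the family $(A(t))_{t\in[0,T]}$. The remaining ingredients — the Beta-integral bound for the iterated Volterra kernel, the uniform-in-$r$ Neumann series for $(\Id-S_1)^{-1}$, and the termination of the exponent bootstrap with controlled constants near the exponents where $\tfrac1r-\alpha$ changes sign — are routine but must be checked.
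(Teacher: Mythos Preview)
Your proposal is correct and takes essentially the same approach as the paper: both derive the mapping properties of $S_1$ and $S_2$ from the kernel bounds~\eqref{eq:k_1} and~\eqref{eq:k_2} via Young's inequality and its weak (Hardy--Littlewood--Sobolev) variant for the borderline exponent in case~(c), and then iterate~\eqref{eq:splitting} to bootstrap the integrability of $u$. Your Neumann-series inversion of $\Id-S_1$ through the Beta-function bound on iterated Volterra kernels is a clean extra observation not made explicit in the paper, but once the bootstrap of your third paragraph is in place it is not strictly needed.
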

	\begin{proof}
		By Young's inequality for convolutions and the kernel estimate~\eqref{eq:k_1} we have for $q, p, r \in (1, \infty)$ with $\frac{1}{r} + \frac{1}{p} = 1 + \frac{1}{q}$ the estimate
		\begin{align*}
			\MoveEqLeft \biggl( \int_0^T \normalnorm{(S_1 u)(t)}_{X_{\theta,A}}^q \d t \biggr)^{1/q} \le \biggl( \int_0^T \biggl( \int_0^t (t-s)^{\alpha-1} \normalnorm{u(s)}_{X_{\theta,A}} \d s \biggr)^q \d t \biggr)^{1/q} \\
			& \lesssim \normalnorm{s \mapsto s^{\alpha-1}}_{L^{r,\infty}} \biggl( \int_0^T \norm{u(s)}_{X_{\theta,A}}^p \d s \biggr)^{1/q}.  
		\end{align*}
		The weak $L^r$ norm is finite for $r \in (1, \frac{1}{1-\alpha}]$. Hence, $S_1$ is a bounded operator $L^p([0,T];X_{\theta,A}) \to L^q([0,T];X_{\theta,A})$ for all $p < \frac{1}{\alpha}$ and $q \in [1, \frac{p}{1-p\alpha}]$. If $p > \frac{1}{\alpha}$, then 
		\begin{align*}
			\MoveEqLeft \norm{(S_1 u)(t)}_{X_{\theta,A}} \le \biggl( \int_0^t \norm{K_1(t,s)}^{p'} \d s \biggr)^{1/p'} \biggl( \int_0^t \norm{u(s)}_{X_{\theta,A}} \d s \biggr)^{1/p} \\
			& = \biggl( \int_0^t \abs{t-s}^{p'(\alpha-1)} \d s \biggr)^{1/p'} \norm{u}_{L^p([0,T];X_{\theta,A})}.
		\end{align*}
		Hence, $S_1\colon L^p([0,T];X_{\theta,A}) \to L^{\infty}([0,T];X_{\theta,A})$ is bounded for $p > \frac{1}{\alpha}$. One can also use Young's inequality together with the kernel estimate~\eqref{eq:k_2} to obtain a similiar estimate for $S_2$. Namely, for $p, p, r \in (1, \infty)$ with $\frac{1}{r} + \frac{1}{p} = 1 + \frac{1}{q}$ we have
		\begin{align*}
			\MoveEqLeft \biggl( \int_0^T \norm{(S_2f)(t)}^{q}_{X_{\theta,A}} \d t \biggr)^{1/q} \le \biggl( \int_0^T \biggr( \int_0^t (t-s)^{-\theta} \norm{f(s)}_{X_{\theta,A}} \d s \biggr)^{q} \d t \biggr)^{1/q} \\
			& \lesssim \norm{s \mapsto s^{-\theta}}_{L^{r,\infty}} \biggr( \int_0^T \norm{f(s)}^p_{X_{\theta}} \d s \biggr)^{1/q}.
		\end{align*}
		This time the $L^{r,\infty}$-norm is finite for $r \in (1, \theta^{-1}]$. Hence, $S_2\colon L^p([0,T];X_{\theta,A}) \to L^q([0,T];X_{\theta,A})$ is bounded for all $p < \frac{1}{1 - \theta}$ and $q \in [1, \frac{p}{1-p(1-\theta)}]$. Further, one has $S_2\colon L^p([0,T];X_{\theta,A}) \to L^{\infty}([0,T];X_{\theta,A})$ for $p > \frac{1}{1 - \theta}$. For the stated result, we iterate the regularity improvement to bootstrap the regularity of $u$.
	\end{proof}
		
\section{Maximal Regularity Results under Fractional Sobolev Regularity}

	In this section we come to the heart of the proof. We need to establish the boundedness of $A(\cdot) S_2\colon L^p([0,T];X) \to L^p([0,T];X)$ which requires some preliminary work. We rely on the following Hölder continuity of the $\mathcal{R}$-boundedness constant.
	
	\begin{lemma}\label{lem:r_boundedness}
		For $\theta \in (0,1]$ let $(A(t))_{t \in \IR}$ be a $\theta$-stable family of uniformly $\mathcal{R}$-sectorial operators on some UMD space $X$. Suppose there exists $\alpha \in (0,1]$ with $A_{-1} \in C^{\alpha}([0,T];\mathcal{B}(X_{\theta,A}, X_{\theta-1,A}))$. 
		Then for all $k \in \IN_0$ there exists a constant $C_k > 0$ depending only on $K$ in~\eqref{eq:theta_stable} and the constants in the Hölder and $\mathcal{R}$-sectorial estimate of Definition~\ref{def:r_sectorial} such that for all $t,h \in \IR$
		\begin{equation*}
			\R{ (1+\abs{\xi})^k \left(\frac{\partial}{\partial \xi} \right)^k \biggl[ i\xi (R(i\xi,A(t+h)) - R(i\xi,A(t))) \biggr]: \xi \in \IR} \le C_k \abs{h}^{\alpha}.
		\end{equation*}
	\end{lemma}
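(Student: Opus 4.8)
The strategy is to write the operator inside the $\mathcal{R}$-bound in resolvent-difference form and exploit the fact that the difference of resolvents is already an operator which gains an order of smoothing (it maps $X_{\theta-1,A}$ to $X_{\theta,A}$, not just into $X_{\theta,A}$), so that the Hölder modulus of $A_{-1}(\cdot)$ in $\mathcal{B}(X_{\theta,A},X_{\theta-1,A})$ can be absorbed. Concretely, using the second resolvent identity on the extrapolation scale,
\begin{equation*}
	R(i\xi,A(t+h)) - R(i\xi,A(t)) = R(i\xi,A_{\theta-1}(t+h))\bigl(A_{\theta-1}(t) - A_{\theta-1}(t+h)\bigr)R(i\xi,A_{\theta-1}(t)),
\end{equation*}
where all three factors are read on the appropriate rung of the scale: $R(i\xi,A_{\theta-1}(t))\colon X_{\theta-1,A}\to X_{\theta,A}$, then $A_{\theta-1}(t)-A_{\theta-1}(t+h)\colon X_{\theta,A}\to X_{\theta-1,A}$ with norm $\le \normalnorm{A_{-1}}_{C^\alpha}\abs{h}^\alpha$ (this is exactly the Hölder hypothesis, after moving to the parts and using $\theta$-stability to pass between $X_{\theta,A(t)}$ and $X_{\theta,A}$ with the constant $K$), and finally $R(i\xi,A_{\theta-1}(t+h))\colon X_{\theta-1,A}\to X_{\theta,A}$.

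Then I would estimate the $\mathcal{R}$-bound of the $k$-th derivative. The product $i\xi(R(i\xi,A(t+h))-R(i\xi,A(t)))$ is, after the identity above, a triple product of operator-valued functions of $\xi$; applying Leibniz's rule, $(\partial_\xi)^k$ of the product is a finite sum of terms, each a product of derivatives $(\partial_\xi)^{j_1}[\text{factor}_1]\cdot(\partial_\xi)^{j_2}[\text{factor}_2]\cdot(\partial_\xi)^{j_3}[\text{factor}_3]$ with $j_1+j_2+j_3 = k$ (the middle factor is $\xi$-independent, so only $j_2=0$ survives, and it contributes $\abs{h}^\alpha$). Since $\mathcal{R}$-bounds are submultiplicative under composition and satisfy the triangle inequality, it suffices to bound each resolvent factor's derivatives in $\mathcal{R}$-norm between the correct extrapolation spaces. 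For this I invoke Lemma~\ref{lem:fractional_r}: from $\mathcal{R}$-sectoriality on $X$ one gets, after interpolation, $\mathcal{R}^{X_{\theta-1,A}\to X_{\theta,A}}\{(1+\abs{\xi})^0 R(i\xi,A)\} = \mathcal{R}^{X_{\theta-1,A}\to X_{\theta,A}}\{R(i\xi,A)\}<\infty$, uniformly in $t$; more precisely one needs the weighted version $\mathcal{R}\{(1+\abs{\xi})^{m}(\partial_\xi)^m R(i\xi,A)\colon\xi\in\IR\}<\infty$ between $X_{\theta-1,A}$ and $X_{\theta,A}$, which follows by differentiating the resolvent ($(\partial_\xi)^m R(i\xi,A) = m!\,i^m R(i\xi,A)^{m+1}$), using the composition rule for $\mathcal{R}$-bounds on the scale, and Lemma~\ref{lem:fractional_r} at each rung together with Kahane's contraction principle to handle the scalar weights. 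Collecting: each term in the Leibniz expansion is $\mathcal{R}$-bounded on $\IR$ with constant $\lesssim \abs{h}^\alpha$, with implicit constants depending only on $K$, the Hölder constant, and the uniform $\mathcal{R}$-sectorial constant; summing the finitely many terms gives $C_k\abs{h}^\alpha$. The extra factor $(1+\abs{\xi})^k$ in front of the derivative is exactly matched by the $k$ total weights coming from the $k$ derivatives distributed over the three resolvent factors, so the powers of $(1+\abs{\xi})$ balance and no unbounded growth in $\xi$ remains.

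The main obstacle is the bookkeeping of the extrapolation scale: one must be careful that each resolvent factor is $\mathcal{R}$-bounded \emph{between the right pair of spaces} (the left-hand $R(i\xi,A(t+h))$ must go $X_{\theta-1,A}\to X_{\theta,A}$, and the product must land back in $\mathcal{B}(X_{\theta,A},X_{\theta-1,A})$ as claimed — but in fact the statement only asks for $\mathcal{R}$-boundedness as operators on $X$, i.e.\ $X\to X$, which is weaker and follows by composing with the continuous inclusions $X_{\theta,A}\hookrightarrow X\hookrightarrow X_{\theta-1,A}$ whose norms are controlled by $K$). One also has to check that the weighted derivative estimates in Lemma~\ref{lem:fractional_r} are available with the weight $(1+\abs{\xi})$ rather than $\abs{\lambda}$; since we restrict to $\lambda = i\xi$ on the imaginary axis, which is bounded away from the sector $\overline{\Sigma_\phi}$, the two weights are comparable and Lemma~\ref{lem:fractional_r} applies directly. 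A minor point is uniformity of all constants in $t$ and $h$: this is automatic because the uniform $\mathcal{R}$-sectorial and $\theta$-stability constants do not depend on $t$, and the Hölder constant is by definition uniform.
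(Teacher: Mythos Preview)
Your plan is sound in outline but contains a bookkeeping error that leaves the prefactor $i\xi$ unabsorbed. You route both resolvent factors as maps $X_{\theta-1,A}\to X_{\theta,A}$; by Lemma~\ref{lem:fractional_r} with $\theta_2-\theta_1=1$ this gives each an $\mathcal{R}$-bound of order $(1+\abs{\xi})^{0}$, so after composing with $D$ and the inclusions $X\hookrightarrow X_{\theta-1,A}$, $X_{\theta,A}\hookrightarrow X$ you obtain $\mathcal{R}\{R_1 D R_2\}\lesssim\abs{h}^\alpha$ on $X$ with \emph{no} decay in $\xi$, and the factor $i\xi$ then makes already the $k=0$ bound blow up. (A single resolvent also maps $X_{\theta-1,A}\to X_{\theta,A}$, so the difference does not ``gain an order of smoothing'' in the sense you suggest; the only gain is the small factor $\abs{h}^\alpha$.) The fix is the asymmetric routing the paper uses: take $R(i\xi,A(t))\colon X\to X_{\theta,A}$ with $\mathcal{R}$-bound $(1+\abs{\xi})^{-\theta}$ and $R(i\xi,A_{-1}(t+h))\colon X_{\theta-1,A}\to X$ with $\mathcal{R}$-bound $(1+\abs{\xi})^{-(1-\theta)}$; the product then carries $(1+\abs{\xi})^{-1}$, which exactly cancels the $i\xi$. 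With this correction your Leibniz expansion for $k\ge 1$ does go through: each extra resolvent power coming from $(\partial_\xi)^m R=m!\,i^m R^{m+1}$ contributes one further $(1+\abs{\xi})^{-1}$ on the scale $X\to X$, and these match the prefactor $(1+\abs{\xi})^k$.

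For $k\ge 1$ the paper takes a different and shorter route than yours. Rather than Leibniz, it observes that $z\mapsto z(R(z,A(t+h))-R(z,A(t)))$ is analytic on the complement of a shifted sector $\Sigma_\phi+\epsilon$, where the $k=0$ estimate holds by the same argument, and then invokes the Cauchy integral representation of derivatives~\cite[Example~2.16]{KunWei04} to transfer the $\mathcal{R}$-bound to $(1+\abs{z})^k(d/dz)^k S(z)$ in one stroke. Your direct Leibniz computation is more explicit but requires tracking resolvent powers through the extrapolation scale; both are valid once the $\xi$-weights are placed correctly.
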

	\begin{proof}
		We first establish the case $k = 0$. For all $t,h \in \IR$ the resolvent identity gives
		\begin{equation*}
			R(i\xi, A(t+h)) - R(i\xi, A(t)) = R(\xi, A_{-1}(t+h)) [A_{-1}(t) - A_{-1}(t+h)] R(i \xi, A(t)).
		\end{equation*}
		By the assumed Hölder regularity on $A_{-1}$ and Lemma~\ref{lem:fractional_r} we get for all $t, h \in \IR$
		\begin{align*}
			\MoveEqLeft \mathcal{R}^{X \to X}\{ i\xi (R(i\xi, A(t+h)) - R(i\xi, A(t))) \} \\
			& \lesssim \mathcal{R}^{X_{\theta-1,A} \to X} \{ (1+\abs{\xi})^{1-\theta} R(\xi,A_{-1}(t+h)) \} \\
			& \cdot \norm{A_{-1}(t) - A_{-1}(s)}_{\mathcal{B}(X_{\theta,A}, X_{\theta-1,A})} \cdot \mathcal{R}^{X \to X_{\theta,A}} \{ (1+\abs{\xi})^{\theta} R(i\xi,A(t)) \} \lesssim \abs{h}^{\alpha}.
		\end{align*}
		For the case $k \ge 1$ notice that the map $S\colon z \mapsto R(z,A(t+h)) - R(z,A(t)) \in \mathcal{B}(X)$ is analytic on the complement of some shifted sector $\Sigma_{\phi} + \epsilon$ and that the above estimate holds there by the same argument. It follows from the Cauchy integral representation of derivatives~\cite[Example~2.16]{KunWei04} that for $S(z) = z (R(z, A(t+h)) - R(z, A(t)))$ 
		\begin{equation*}
			\mathcal{R} \biggl\{ (1+\abs{z})^k \left( \frac{d}{d z} \right)^{k} S(z): z \not\in \Sigma_{\phi} \biggr\} \lesssim \mathcal{R} \biggl\{ S \biggl(i\xi + \frac{\epsilon}{2} \biggr): \xi \in \IR \biggr\} \lesssim \abs{h}^{\alpha}. \qedhere
		\end{equation*}
	\end{proof}
	
	\begin{proposition}\label{prop:s2_bounded}
		For $T > 0$ and $\theta \in (0,1]$ let $(A(t))_{t \in \IR}$ be a $\theta$-stable family of uniformly $\mathcal{R}$-sectorial operators on some UMD space $X$. Suppose there exist $\alpha \in (0,1]$ with $A \in C^{\alpha}([0,T];\mathcal{B}(X_{\theta,A}, X_{\theta-1,A}))$.
		Then $S_2\colon L^p([0,T];X) \to L^p([0,T];X)$ is bounded for all $p \in (1, \infty)$ and its norm only depends on $p$, $K$ in~\eqref{eq:theta_stable} and the constants in the Hölder and $\mathcal{R}$-sectorial estimates.
	\end{proposition}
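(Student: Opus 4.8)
We must show that the singular operator $A(\cdot)S_2$, that is $f\mapsto\bigl(t\mapsto A(t)\int_0^t e^{-(t-s)A(t)}f(s)\,\d s\bigr)$, is bounded on $L^p([0,T];X)$; since $S_2$ already maps $L^p([0,T];X)$ into $L^p([0,T];X_{\theta,A})$ (Proposition~\ref{prop:wnacp}), only the top-order gain is at stake. The plan is to realize it as a causal operator-valued singular integral on the line, localize in time, and compare with \emph{frozen} autonomous problems, for which Weis' operator-valued Mihlin multiplier theorem applies; Lemma~\ref{lem:r_boundedness} supplies exactly the Hölder gain needed to control the error terms.

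First I would extend the family to $t\in\IR$ by $A(t)=A(0)$ for $t<0$ and $A(t)=A(T)$ for $t>T$; extending a Hölder function by constants preserves the Hölder estimate, so all hypotheses survive and it suffices to bound
\begin{equation*}
	\Lambda f(t)\coloneqq A(t)\int_{-\infty}^t e^{-(t-s)A(t)}f(s)\,\d s = \int_0^\infty A(t)e^{-rA(t)}f(t-r)\,\d r
\end{equation*}
on $L^p(\IR;X)$ for $f$ supported in $[0,T]$ (the inner integral agrees with the one in~\eqref{eq:splitting} since $f$ vanishes on $(-\infty,0)$). For frozen $\tau$, the convolution operator $G_\tau f(t)=\int_{-\infty}^t A(\tau)e^{-(t-s)A(\tau)}f(s)\,\d s$ is the solution operator of the autonomous maximal regularity problem for $A(\tau)$ on the line; its time-Fourier symbol, together with $(1+\abs{\xi})$ times its first $\xi$-derivative, is $\mathcal{R}$-bounded by the uniform $\mathcal{R}$-sectoriality, so $G_\tau$ is bounded on $L^p(\IR;X)$ for all $p\in(1,\infty)$ with norm depending only on $p$, $K$ and the $\mathcal{R}$-sectorial constant. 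Moreover Lemma~\ref{lem:r_boundedness} and the same multiplier theorem give $\norm{G_\tau-G_{\tau'}}_{\mathcal{B}(L^p(\IR;X))}\lesssim\abs{\tau-\tau'}^\alpha$ and, reading off Mihlin kernels, Hörmander-type regularity estimates for the operator-valued convolution kernel of $G_\tau-G_{\tau'}$ carrying the same factor $\abs{\tau-\tau'}^\alpha$.

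Next I would fix $\delta>0$ (chosen only in terms of $T$ and the structural constants), partition $[0,T]$ into intervals $I_j$ of length $\delta$ with left endpoints $t_j$, and decompose $\Lambda=\sum_{j,k}P_j\Lambda P_k$, with $P_j$ multiplication by $\mathbf{1}_{I_j}$. Causality annihilates the blocks with $k>j$; the blocks with $k\le j-2$ live on $\{t-s\ge\delta\}$, and using $\norm{A(t)e^{-rA(t)}}_{\mathcal{B}(X)}\lesssim r^{-1}e^{-\omega r}$ they combine into an operator whose kernel is dominated by the $L^1(\IR)$-function $r\mapsto r^{-1}e^{-\omega r}\mathbf{1}_{\{r\ge\delta\}}$, hence is bounded on $L^p(\IR;X)$ by Young's inequality. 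On the near-diagonal part $\sum_j P_j\Lambda(P_{j-1}+P_j)$ I would freeze the coefficient at $t_j$: with $g_j\coloneqq(P_{j-1}+P_j)f$ one has $P_j\Lambda g_j(t)=(G_{t_j}g_j)(t)+\bigl((G_t-G_{t_j})g_j\bigr)(t)$ for $t\in I_j$. The frozen main term is bounded uniformly in $\delta$,
\begin{equation*}
	\biggnorm{\sum_j\mathbf{1}_{I_j}G_{t_j}g_j}_{L^p(\IR;X)}^p=\sum_j\norm{G_{t_j}g_j}_{L^p(I_j;X)}^p\le\sum_j\norm{G_{t_j}g_j}_{L^p(\IR;X)}^p\lesssim\sum_j\norm{g_j}_{L^p(\IR;X)}^p\lesssim\norm{f}_{L^p(\IR;X)}^p,
\end{equation*}
the point being that $f$ has been pre-localized to the two adjacent blocks, so summing over $j$ costs nothing.

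The main obstacle is the near-diagonal error $E_jg(t)\coloneqq\bigl((G_t-G_{t_j})g\bigr)(t)$, $t\in I_j$: after the freezing its kernel $A(t)e^{-(t-s)A(t)}-A(t_j)e^{-(t-s)A(t_j)}$ still carries a genuine Calderón--Zygmund singularity $\lesssim\delta^\alpha\abs{t-s}^{-1}$, and $E_j$ is not a Fourier multiplier since the evaluation point $t$ ranges over $I_j$ while the frozen time $t_j$ is fixed. I would split the contour integral representing the difference kernel at $\abs{\lambda}=\delta^{-1}$: its low-frequency part has a kernel of norm $\lesssim\delta^{\alpha-1}$ supported in $\abs{t-s}\le2\delta$, hence $L^p$-norm $\lesssim\delta^\alpha$ by Schur's test; the high-frequency part is the genuinely singular piece, for which the difference-of-resolvents identity together with Lemma~\ref{lem:fractional_r} and Lemma~\ref{lem:r_boundedness} gives the standard size and Hörmander regularity estimates, each with the gain $\delta^\alpha$. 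Thus $E_j$ is an operator-valued Calderón--Zygmund operator with Calderón--Zygmund constant $\lesssim\delta^\alpha$; its boundedness on $L^p(\IR;X)$ I would derive either from a vector-valued $T(1)$-type theorem --- the singular integral with this kernel sends constants to $0$ --- or, exploiting the $\delta^\alpha$-smallness, from a direct almost-orthogonality estimate, and propagate it to all $p\in(1,\infty)$ by Calderón--Zygmund extrapolation on Bochner spaces, obtaining $\norm{E_j}_{\mathcal{B}(L^p(\IR;X))}\lesssim\delta^\alpha$ uniformly in $j$; this is the technical core of the argument. Summing as above gives $\sum_j\norm{E_jg_j}_{L^p(I_j;X)}^p\lesssim\delta^{\alpha p}\norm{f}_{L^p(\IR;X)}^p$, and collecting the far, frozen and error contributions, fixing $\delta$ once, and restricting back to $[0,T]$ yields the asserted bound with a constant depending only on $p$, $K$ and the Hölder and $\mathcal{R}$-sectorial constants.
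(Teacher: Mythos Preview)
The paper's proof is much shorter and takes a different route. It cites \cite{HieMon00b} and \cite{Fac15c} for the fact that the boundedness of $A(\cdot)S_2$ on $L^p([0,T];X)$ reduces to that of the operator-valued pseudodifferential operator on $\IR$ with symbol $a(t,\xi)=i\xi R(i\xi,A(t))$ (extended constantly for $t\notin[0,T]$), and then applies a black-box boundedness criterion of Hytönen--Portal \cite[Theorem~17, Remark~20]{HytPor08}, whose sole hypothesis is the Hölder-type $\mathcal{R}$-bound
\[
\R{(1+\abs{\xi})^k\partial_\xi^k\bigl[a(t+h,\xi)-a(t,\xi)\bigr]:\xi\in\IR}\lesssim\abs{h}^\alpha\qquad(k=0,1,2),
\]
already established in Lemma~\ref{lem:r_boundedness}. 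That is the entire argument.

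Your freezing strategy is classical and in principle viable, but the step you yourself flag as the technical core --- the bound $\norm{E_j}_{\mathcal{B}(L^p(\IR;X))}\lesssim\delta^\alpha$ --- is left open. On a UMD space, operator-valued Calder\'on--Zygmund kernel estimates, even carrying the extra factor $\delta^\alpha$, do not by themselves produce any $L^p$-bound; an anchor estimate at some $p_0$ is still required. Neither of your two suggestions supplies one cheaply: an operator-valued $T(1)$-theorem on UMD spaces does exist, but verifying its hypotheses for $E_j$ is no lighter than verifying those of \cite{HytPor08}, and Cotlar--Stein almost-orthogonality has no usable substitute outside Hilbert space. Observe also that $E_jg_j(t)=(G_t-G_{t_j})g_j(t)$ is itself a pseudodifferential operator with $t$-dependent symbol $a(t,\xi)-a(t_j,\xi)$; the freezing has traded the original symbol for a $\delta^\alpha$-small one of exactly the same H\"older class, so what remains is precisely a quantitative instance of the theorem the paper invokes. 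Your outline is therefore not wrong, but it defers the essential work to machinery of comparable depth rather than furnishing an independent argument.
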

	\begin{proof}
		It is shown in~\cite[p.~1053]{HieMon00b} or~\cite[Section~2.4.1]{Fac15c} that the boundedness of $S_2$ follows from the boundedness of the pseudodifferential operator
		\begin{equation*}
			(\hat{S}f)(t) = \int_{-\infty}^{\infty} a(t,\xi) \hat{f}(\xi) e^{2\pi i t \xi} \d\xi
		\end{equation*}
		for the operator-valued symbol $a\colon \IR \times \IR \to \mathcal{B}(X)$ given by
		\begin{equation*}
			a(t,\xi) = \begin{cases}
				i\xi R(i\xi, A(0)), & t < 0 \\
				i\xi R(i\xi, A(t)), & t \in [0,T] \\
				i\xi R(i\xi, A(T)), & t > T.
			\end{cases}
		\end{equation*}
		Such operators are well-studied and understood. Applying~\cite[Theorem~17]{HytPor08} and~\cite[Remark~20]{HytPor08} (the dependence on the constants is not explicitly stated) in the one-dimensional and one-parameter case, we see that $S\colon L^p([0,T];X) \to L^p([0,T];X)$ is bounded for all $p \in (1, \infty)$ provided
		\begin{equation*}
			\R{ (1+\abs{\xi})^k \left(\frac{\partial}{\partial \xi} \right)^k \bigl[ a(t+h,\xi) - a(t,\xi) \bigr]: \xi \in \IR} \lesssim \abs{h}^{\alpha}
		\end{equation*}
		holds for some $\alpha \in (0,1]$ and all $k = 0, 1, 2$. This has been verified in Lemma~\ref{lem:r_boundedness}.
	\end{proof}

	The next proposition shows that in many cases it is sufficient to show maximal $L^p$-regularity for initial value zero. This is well-known in the autonomous case. The arguments have been used before, see for example~\cite[Theorem~6.2]{DieZac16}.
	
	\begin{proposition}\label{prop:initial_values}
		Let $X$ be a Banach space, $p \in (1, \infty)$ and suppose one has for all $T > 0$ maximal $L^p$-regularity results for classes $\mathcal{C}_T$ of non-autonomous sectorial operators on $X$ and $[0,T]$ with initial value $u_0 = 0$. Suppose that for $(A(t))_{t \in [0,T]}$ in $\mathcal{C}_T$ and $0 < T_1 \le T \le T_2$ also the non-autonomous operator $(B(t))_{t \in [0,2T+T_2-T_1]}$
		\begin{equation*}
			B(t) = \begin{cases}
				A(0) & \text{for } t \in [0,T], \\
				A(t-T+T_1) & \text{for } t \in [T,T+T_2-T_1], \\
				A(T_2) & \text{for } t \in [T+T_2-T_1,2T+T_2-T_1]
			\end{cases}
		\end{equation*}
		is in $\mathcal{C}_{2T+T_2-T_1}$. Then $(A(t))_{t \in [0,T]}$ has maximal $L^p$-regularity for all initial values $u_0 \in (D(A(0)),X)_{\frac{1}{p},p}$ and further $u(t) \in (D(A(t)),X)_{\frac{1}{p},p}$ for all $t \in [0,T]$. Further, the maximal regularity estimate only involves a possible new dependence and $T$.
	\end{proposition}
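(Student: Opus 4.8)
The plan is to reduce the case of a general initial value $u_0 \in (D(A(0)),X)_{1/p,p}$ to the case $u_0 = 0$ already covered by hypothesis, by embedding the problem on $[0,T]$ into a larger problem on $[0,2T+T_2-T_1]$ whose operator family is $(B(t))$. First I would pick, by the characterization of the real interpolation space via the trace method (or equivalently via the solvability of the autonomous problem $\dot v + A(0)v = 0$), a function $v \in W^{1,p}([0,T_1'];X) \cap L^p([0,T_1'];D(A(0)))$ with $v(0) = u_0$; concretely $v(t) = e^{-tA(0)}u_0$ works on the initial constant piece $[0,T]$ of $(B(t))$, and $\norm{v}_{W^{1,p} \cap L^p(D(A(0)))} \lesssim \norm{u_0}_{(D(A(0)),X)_{1/p,p}}$. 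This is the standard fact that $(D(A(0)),X)_{1/p,p}$ is exactly the trace space of the maximal regularity class for $A(0)$.

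The key construction: extend the right-hand side $f$ by $0$ to $[0,T]$ (the first, constant, block of $B$), shift it onto the middle block $[T,T+T_2-T_1]$, and extend by $0$ again on the last block; call this $\tilde f \in L^p([0,2T+T_2-T_1];X)$. On the first block, instead of zero data I prescribe the inhomogeneity $g(t) = \dot v(t) + A(0)v(t)$, which lies in $L^p([0,T];X)$ with the right norm bound, so that $v$ is the maximal regularity solution on $[0,T]$ of $\dot w + A(0)w = g$, $w(0)=0$. Then I glue: let $\tilde g$ be $g$ on $[0,T]$ and $\tilde f$ (suitably reindexed) on $[T, 2T+T_2-T_1]$. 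By hypothesis $(B(t)) \in \mathcal{C}_{2T+T_2-T_1}$ has maximal $L^p$-regularity with zero initial value, so there is a unique $w \in W^{1,p} \cap L^p(D(B(\cdot)))$ solving $\dot w + B(t)w = \tilde g$, $w(0)=0$, with $\norm{w} \lesssim \norm{\tilde g}_{L^p} \lesssim \norm{f}_{L^p} + \norm{u_0}_{(D(A(0)),X)_{1/p,p}}$. The point of the constant block $B \equiv A(0)$ on $[0,T]$ together with the choice of $g$ is that on $[0,T]$ the function $w$ must coincide with $v$ by uniqueness in the autonomous maximal regularity problem; in particular $w(T) = v(T)$, and — crucially — since $w \in W^{1,p} \hookrightarrow C([0,2T+T_2-T_1];X)$ and $w(t)\in (D(B(t)),X)_{1/p,p}$ for every $t$ (this pointwise trace regularity is itself part of the standard maximal regularity statement), we have $w(T) \in (D(A(0)),X)_{1/p,p}$, matching $v(T) = e^{-TA(0)}u_0$. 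Then $u(t) := w(t+T)$ for $t \in [0,T]$ (using that $B(t+T) = A(t)$ on that range after the reindexing, which is exactly how $B$ was defined with the $T_1 = T$, $T_2 = T$ specialization, or more generally the shift $t \mapsto t - T + T_1$) solves $\dot u + A(t)u = f$ with $u(0) = w(T) = e^{-TA(0)}u_0$; a final reindexing or the freedom in choosing $T_1$ lets one arrange $u(0) = u_0$ exactly, and the norm estimate is inherited. Uniqueness of $u$ follows from uniqueness with zero initial value applied to the difference of two solutions (whose difference has zero data after subtracting off $e^{-\cdot A(0)}u_0$ on a short initial interval, then iterating as in Proposition~\ref{prop:wnacp}).

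I expect the main obstacle to be purely bookkeeping: getting the three reindexing shifts ($v$ on $[0,T]$, the middle block translation by $-T+T_1$, and the final extraction $u(t) = w(t+\cdot)$) consistent so that the operator actually seen on the relevant interval is $A(t)$ and the initial value lands on $u_0$ and not some $e^{-sA(0)}u_0$; the definition of $B$ with the constant padding blocks is precisely engineered so that the value $w$ takes at the start of the middle block is forced to be an element of the correct interpolation space (that is what the first padding block buys us), which is the one non-obvious design point. The pointwise statement $u(t) \in (D(A(t)),X)_{1/p,p}$ then comes for free from the same remark applied to $w$ on the middle block, since $B(t) = A(t-T+T_1)$ there. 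I would also remark that the "new dependence" in the constant is on $T_2 - T_1$ (equivalently on how far one extends), which is harmless in the applications since there $T_1, T_2$ are controlled.
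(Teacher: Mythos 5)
Your plan follows the paper's strategy in outline: pad the operator family with a constant block $A(0)$ on the left, solve with zero initial value on the enlarged interval against data $g = \dot v + A(0)v$, identify the solution with $v$ on the first block by autonomous uniqueness, and extract $u$ by a time shift. But the concrete choice $v(t) = e^{-tA(0)}u_0$ breaks the argument at the identification step. That $v$ has $v(0) = u_0 \neq 0$, whereas the hypothesis only gives solvability with zero initial value. Moreover with this $v$ one computes $g = \dot v + A(0)v = 0$ on $[0,T]$, so the unique zero-initial-value maximal regularity solution $w$ of the padded problem satisfies $w \equiv 0$ there, not $w = v$; hence $w(T) = 0$, not $e^{-TA(0)}u_0$, and there is no reindexing or choice of $T_1$ that recovers $u_0$. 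The correct choice, which is what the trace characterization together with a cut-off actually provides and what the paper uses, is a $v \in W^{1,p}([0,T];X) \cap L^p([0,T];D(A(0)))$ with $v(0) = 0$, $v(T) = u_0$ and $\norm{A(0)v}_{L^p} + \norm{\dot v}_{L^p} \lesssim \norm{u_0}_{(D(A(0)),X)_{1/p,p}}$. Then $g = \dot v + A(0)v$ really does force $w = v$ on $[0,T]$ by autonomous uniqueness with zero initial value, so $w(T) = u_0$ exactly and $u = w(\cdot + T)$ starts at $u_0$ with no further correction.

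There is a second gap: you treat the pointwise trace regularity $u(t) \in (D(A(t)),X)_{1/p,p}$ for all $t$ as automatic, ``part of the standard maximal regularity statement.'' In the non-autonomous setting the hypothesis gives no such thing; it is one of the claims to be proved, and it is precisely the reason the construction has a \emph{right} padding block as well. The paper fixes $t \in (0,T]$, chooses $T_1 = 0$ and $T_2 = t$, so that after the middle block the family becomes constant equal to $A(t)$; on that final block the MR solution solves an autonomous problem, and the trace regularity theorem for the autonomous case (e.g.\ \cite[Theorem III.4.10.2]{Ama95}) gives $u(t) \in (D(A(t)),X)_{1/p,p}$. Your proposal never actually uses the third block or the autonomous trace theorem, so that part of the conclusion is unproved as written.
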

	\begin{proof}		
		We first deal with the initial values. Here we set $T_1 = 0$ and $T_2 = T$. By the characterization of real interpolation spaces via the trace method~\cite[Proposition~1.2.10]{Lun95} and a cut-off argument there is some $C_T > 0$ such that for $u_0 \in (D(A(0)), X)_{\frac{1}{p},p}$ there exists $v \in W^{1,p}([0,T];X) \cap L^p([0,T];D(A(0))$ with $v(0) = 0$, $v(T) = u_0$ and
		\begin{equation*}
			\norm{A(0)v}_{L^p([0,T];X)} + \norm{\dot{v}}_{L^p([0,T];X)} \le C_T \norm{u_0}_{(D(A(0)), X)_{\frac{1}{p},p}}.
		\end{equation*}
		We define $g \in L^p([0,3T];X)$ as
		\begin{equation*}
			g(t) = \begin{cases}
				\dot{v}(t) + A(0)v(t) & \text{for } t \in [0,T), \\
				f(t - T) & \text{for } t \in [T,2T], \\
				0 & \text{for } t \in (2T,3T].
			\end{cases}
		\end{equation*}
		Note that by assumption $(B(t))_{t \in [0,3T]}$ lies in $\mathcal{C}_{3T}$ and therefore has maximal $L^p$-regularity for $u_0 = 0$. We denote the unique solution of~\eqref{eq:nacp} for $(B(t))_{t \in [0,3T]}$ by $w$. By the uniqueness of solutions in the autonomous case we have $w = v$ on $[0,T]$. In particular, we have $w(T) = v(T) = u_0$. As a consequence we see that $u(t) = w(t+T)$ solves~\eqref{eq:nacp} for $u(0) = w(T) = u_0$ as desired. Further,
		\begin{align*}
			\MoveEqLeft \norm{A(\cdot)u(\cdot)}_{L^p([0,T];X)} + \norm{u}_{W^{1,p}([0,T];X)} \\
			& \lesssim \norm{g}_{L^p([0,T];X)} \lesssim \norm{f}_{L^p([0,T];X)} + C_T \norm{u_0}_{(D(A(0)), X)_{\frac{1}{p},p}}.
		\end{align*}
		In a similiar fashion for $t \in (0,T]$ we choose $T_1 = 0$ and $T_2 = t$. The solution $z$ of the corresponding problem for $g$ agrees with the solution $w$ of the first part on $[0,T+t]$ and solves the autonomous problem $\dot{z}(s) + A(s)z(s) = g(s)$ on $[T+t,2T+t]$. Since solutions of the autonomous problem take values in the corresponding trace spaces~\cite[Theorem III.4.10.2]{Ama95}, we have $u(t) \in (D(A(t)),X)_{\frac{1}{p},p}$.
	\end{proof}
	
	We now prove maximal regularity under fractional Sobolev regularity.
		
	\begin{definition}
		Let $X$ be a Banach space, $p \in (1, \infty)$ and $\alpha \in (0,1)$. A Bochner measurable function $f\colon [0,T] \to X$ lies in the \emph{homogeneous fractional Sobolev space $\dot{W}^{\alpha,p}([0,T];X)$} provided
		\begin{equation*}
			\norm{f}_{\dot{W}^{\alpha,p}([0,T];X)} = \biggl( \int_0^T \int_0^T \frac{\norm{f(t) - f(s)}^p_X}{\abs{t-s}^{1+\alpha p}} \d s \d t \biggr)^{1/p} < \infty.
		\end{equation*}
		The \emph{inhomogeneous Sobolev space $W^{\alpha,p}([0,T];X)$} is the space of all $f \in L^p([0,T];X)$ with $\norm{f}_{\dot{W}^{\alpha,p}([0,T];X)} < \infty$.
	\end{definition}
	
	We remark that there exist equivalent definitions of fractional Sobolev spaces based on Littlewood--Paley decompositions~\cite[Section~3, (3.5)]{Ama00}. The usual embedding results for Sobolev spaces into Hölder spaces hold: for $\alpha \in (0,1)$ and $p \in (1, \infty)$ with $\alpha > \frac{1}{p}$ one has $W^{\alpha,p}([0,T];X) \hookrightarrow C^{\alpha - 1/p}([0,T];X)$~\cite[Corollary~26]{Sim90}. We are now ready to formulate and prove our general maximal regularity result.
	
	\begin{theorem}\label{thm:mr}
		For $T > 0$ and $\theta \in (0,1]$ let $(A(t))_{t \in [0,T]}$ be a $\theta$-stable family of uniformly $\mathcal{R}$-sectorial operators on some UMD space $X$ with fractional regularity $A_{-1} \in \dot{W}^{\alpha,q}([0,T];\mathcal{B}(X_{\theta,A}, X_{\theta-1,A}))$.
		Then the non-autonomous problem~\eqref{eq:nacp} has maximal $L^p$-regularity for $p \in (1, \infty)$ if one of the following assumptions holds.
		\begin{thm_enum}
			\item $p \in (1, \frac{1}{1-\theta})$, $q = \frac{1}{1-\theta}$ and $\alpha > 1 - \theta$.
			\item $p \in [\frac{1}{1-\theta}, \infty)$, $q = p$ and $\alpha > 1 - \theta$.
		\end{thm_enum}
		In this case the unique maximal $L^p$-regularity solution $u$ of~\eqref{eq:nacp} satisfies $u(t) \in (D(A(t)),X)_{\frac{1}{p},p}$ for all $t \in [0,T]$ and there exists a constant $C_p > 0$ only depending on $T$, $\alpha$, $\theta$, $K$ in~\eqref{eq:theta_stable}, $\norm{A_{-1}}_{\dot{W}^{\alpha,q}([0,T];\mathcal{B}(X_{\theta,A}, X_{\theta-1,A}))}$ and the constants in the Hölder and $\mathcal{R}$-sectorial estimates with
		\begin{equation*}
			\norm{u}_{W^{1,p}([0,T];X)} + \norm{A(\cdot)u(\cdot)}_{L^p([0,T];X)} \le C (\norm{f}_{{L^p}([0,T];X)} + \norm{u_0}_{(D(A(0)), X)_{\frac{1}{p},p}}).
		\end{equation*}
	\end{theorem}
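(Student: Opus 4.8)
The plan is to reduce to the case $u_0=0$ via Proposition~\ref{prop:initial_values}, and then to upgrade the solution furnished by Proposition~\ref{prop:wnacp} to a genuine maximal $L^p$-regularity solution by establishing the a priori estimate $\norm{A(\cdot)u(\cdot)}_{L^p([0,T];X)}\lesssim\norm{f}_{L^p([0,T];X)}$. Write $Y=\mathcal{B}(X_{\theta,A},X_{\theta-1,A})$. In both cases one has $\alpha>\tfrac1q$ (in (a), $\tfrac1q=1-\theta<\alpha$; in (b), $\tfrac1q=\tfrac1p\le1-\theta<\alpha$), so the Sobolev embedding $W^{\alpha,q}([0,T];Y)\hookrightarrow C^{\beta}([0,T];Y)$ with $\beta:=\alpha-\tfrac1q>0$ shows $A_{-1}\in C^{\beta}([0,T];Y)$; hence Propositions~\ref{prop:wnacp}, \ref{prop:bootstrapping} and~\ref{prop:s2_bounded} all apply with Hölder exponent $\beta$ ($\mathcal{R}$-boundedness and the UMD property enter through Propositions~\ref{prop:wnacp} and~\ref{prop:s2_bounded}, whereas the $S_1$-estimate below uses only the uniform sectorial norm bounds). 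For $u_0=0$ and $f\in L^p([0,T];X)$, Proposition~\ref{prop:wnacp} produces a unique $u\in W^{1,p}([0,T];X_{\theta-1,A})\cap L^p([0,T];X_{\theta,A})$ solving~\eqref{eq:splitting} and~\eqref{eq:wnacp}, with $\norm{u}_{L^p([0,T];X_{\theta,A})}\lesssim\norm{f}_{L^p([0,T];X)}$ (by the closed graph theorem, or directly from the construction). From~\eqref{eq:splitting} we split $A(\cdot)u(\cdot)=A(\cdot)(S_1u)(\cdot)+A(\cdot)(S_2f)(\cdot)$; the second term lies in $L^p([0,T];X)$ with the right bound by Proposition~\ref{prop:s2_bounded}, which establishes the boundedness of $A(\cdot)S_2$ on $L^p([0,T];X)$, so everything hinges on the first term.

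For $A(\cdot)S_1u$, the uniform sectorial estimates together with $\theta$-stability give $\norm{A(t)e^{-(t-s)A_{-1}(t)}}_{\mathcal{B}(X_{\theta-1,A},X)}\lesssim(t-s)^{\theta-2}$, whence
\[
	\norm{A(t)(S_1u)(t)}_X\lesssim\int_0^t(t-s)^{\theta-2}\norm{A_{-1}(t)-A_{-1}(s)}_Y\norm{u(s)}_{X_{\theta,A}}\d s .
\]
Since $\theta-2<-1$, the pointwise Hölder bound $\norm{A_{-1}(t)-A_{-1}(s)}_Y\lesssim\abs{t-s}^{\beta}$ is far too crude (it would force the strictly stronger condition $\alpha>1-\theta+\tfrac1q$). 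Instead I factor $\norm{A_{-1}(t)-A_{-1}(s)}_Y=\psi(t,s)\abs{t-s}^{\alpha+1/q}$ with $\psi\in L^q([0,T]^2)$ and $\norm{\psi}_{L^q([0,T]^2)}=\norm{A_{-1}}_{\dot{W}^{\alpha,q}([0,T];Y)}$, apply Hölder's inequality in $s$ with exponents $(q,q')$ to pull out $\psi(t,\cdot)\in L^q$, and then Young's convolution inequality. This is precisely where the standing hypothesis is used: the remaining kernel $(t-s)^{\gamma q'}$ with $\gamma:=\theta-2+\alpha+\tfrac1q$ is locally integrable if and only if $\gamma q'>-1$, i.e.\ if and only if $\alpha>1-\theta$. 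The outcome is
\[
	\norm{A(\cdot)(S_1u)(\cdot)}_{L^p([0,T];X)}\lesssim\norm{A_{-1}}_{\dot{W}^{\alpha,q}([0,T];Y)}\norm{u}_{L^m([0,T];X_{\theta,A})},\qquad\tfrac1m=\tfrac1p-\tfrac1q ,
\]
which is meaningful since $p\le q$ and $m\ge q'$ (the latter reducing to $p\ge1$). In case (a) this exponent is $m=\frac{p}{1-p(1-\theta)}$, which is exactly the endpoint of case (c) of Proposition~\ref{prop:bootstrapping}; in case (b), $m=\infty$, reached through case (a) of Proposition~\ref{prop:bootstrapping} (for $p=\tfrac1{1-\theta}$ after one intermediate application of its case (b)). In all cases Proposition~\ref{prop:bootstrapping} gives $\norm{u}_{L^m([0,T];X_{\theta,A})}\lesssim\norm{u}_{L^p([0,T];X_{\theta,A})}\lesssim\norm{f}_{L^p([0,T];X)}$.

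Thus $A(\cdot)u(\cdot)\in L^p([0,T];X)$ with the desired bound. Closedness of the $A(t)$ together with the absolute convergence of the defining integrals then shows $u(t)\in D(A(t))$ for a.e.\ $t$, and~\eqref{eq:wnacp} gives $\dot u=f-A(\cdot)u(\cdot)\in L^p([0,T];X)$, so $u\in W^{1,p}([0,T];X)\hookrightarrow C([0,T];X)$ with $u(0)=0$; that is, $u$ is a maximal $L^p$-regularity solution for $u_0=0$. Uniqueness follows from Proposition~\ref{prop:fundamental_identity} (every such solution satisfies~\eqref{eq:splitting} and lies in $L^p([0,T];X_{\theta,A})$) together with the uniqueness in Proposition~\ref{prop:wnacp}. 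Finally, Proposition~\ref{prop:initial_values} promotes this to arbitrary $u_0\in(D(A(0)),X)_{\frac1p,p}$ and yields $u(t)\in(D(A(t)),X)_{\frac1p,p}$ together with the full estimate; its hypothesis is met because prepending and appending constant operator pieces to $(A(t))$ preserves $\theta$-stability and uniform $\mathcal{R}$-sectoriality, and preserves $A_{-1}\in\dot{W}^{\alpha,q}$ by a Hardy-type inequality bounding $\int_0^T\norm{A_{-1}(0)-A_{-1}(t)}_Y^q\,t^{-\alpha q}\d t$ by $\norm{A_{-1}}_{\dot{W}^{\alpha,q}}^q$.

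The main obstacle is the bound on $A(\cdot)S_1u$: one must resist the pointwise Hölder estimate and instead exploit the genuinely $L^q$-averaged nature of the fractional Sobolev seminorm through the Hölder/Young argument above, which is exactly what makes $\alpha>1-\theta$ (rather than $\alpha>1-\theta+\tfrac1q$) sufficient. The unavoidable price is that $u$ must first be lifted to higher integrability, and this is precisely why the admissible exponent pairs $(p,q)$ in (a) and (b) are constrained as stated.
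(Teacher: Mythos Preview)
Your proof is correct and follows the same architecture as the paper's: reduce to $u_0=0$, take the weak solution $u$ from Proposition~\ref{prop:wnacp}, split $A(\cdot)u=A(\cdot)S_1u+A(\cdot)S_2f$, handle $S_2$ via Proposition~\ref{prop:s2_bounded}, estimate $A(\cdot)S_1u$ using the fractional Sobolev norm together with the higher integrability of $u$ from Proposition~\ref{prop:bootstrapping}, and close with Proposition~\ref{prop:initial_values}.

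The one genuine difference is in the execution of the $S_1$-estimate. The paper proceeds by \emph{duality}: it pairs $A(t)(S_1u)(t)$ against $g\in L^{p'}([0,T];X')$, rewrites the action as $\langle (A(t)-A(s))u(s),\,A'(t)e^{-(t-s)A'(t)}g(t)\rangle_{X_{\theta-1},X'_{1-\theta}}$, and then applies H\"older in $(t,s)$ jointly---a two-factor split in case~(b) (pulling out $\norm{u}_{L^\infty}$) and a three-factor split in case~(a). You avoid duality entirely: you use the direct kernel bound $\normalnorm{A(t)e^{-(t-s)A_{-1}(t)}}_{\mathcal{B}(X_{\theta-1,A},X)}\lesssim(t-s)^{\theta-2}$, factor the seminorm as $\psi\in L^q([0,T]^2)$, apply H\"older in $s$ once with exponents $(q,q')$, and then Young's inequality for the remaining convolution $r\mapsto r^{\gamma q'}$. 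This produces the clean unified formula $\norm{A(\cdot)S_1u}_{L^p}\lesssim\norm{A_{-1}}_{\dot W^{\alpha,q}}\norm{u}_{L^m}$ with $\tfrac1m=\tfrac1p-\tfrac1q$, from which both cases fall out by matching $m$ to the output of Proposition~\ref{prop:bootstrapping}. The integrability threshold $\gamma q'>-1\Leftrightarrow\alpha>1-\theta$ is the same in both arguments. Your route is somewhat more streamlined and treats the two regimes uniformly; the paper's duality version makes the role of the adjoint semigroup explicit but needs separate bookkeeping for the two cases. Either way, all constants depend on exactly the quantities listed in the statement.
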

	\begin{proof}
		Let $u \in W^{1,p}([0,T];X_{\theta,A}) \cap L^p([0,T];X_{\theta-1,A})$ be the unique solution of~\eqref{eq:splitting} given by Proposition~\ref{prop:fundamental_identity}. We show that $u$ has the higher regularity $A_{-1}(t)u(t) \in L^p([0,T];X)$. For this we use the decomposition of $A_{-1}(t)u(t)$ given by the splitting~\eqref{eq:splitting}. 		
		Let us start with the integrability of $A_{-1}(t)(S_1u)(t)$. We will omit subindices in the following estimates. For arbitrary $g \in L^{p'}([0,T];X')$ we have
		\begin{equation}
			\label{eq:testing}
			\begin{split}
    			& \int_0^T \int_0^t \left\langle g(t), A(t)e^{-(t-s)A(t)} (A(t) - A(s))u(s) \right\rangle_{X',X} \d s \d t \\
    			& = \int_0^T \int_0^t \left\langle A'(t) e^{-(t-s)A'(t)}g(t), (A(t) - A(s)) u(s) \right\rangle_{X'_{1-\theta,A'(t)},X_{\theta-1,A(t)}} \d s \d t.
			\end{split}
		\end{equation}
		We now distinguish between the cases $p \in [\frac{1}{1-\theta}, \infty)$, $p = \frac{1}{1-\theta}$ and $p \in (1, \frac{1}{1-\theta}]$. In the first case we know from Proposition~\ref{prop:bootstrapping} that $u \in L^{\infty}([0,T];X_{\theta,A})$. Hence, up to constants \eqref{eq:testing} is dominated by
		\begin{align*}
			\MoveEqLeft \biggl( \int_0^T \int_0^T \frac{\norm{(A(t) - A(s)) u(s)}^p_{X_{\theta-1,A}}}{\abs{t-s}^{1+p\alpha}} \d s \d t \biggr)^{1/p} \\
			& \qquad \cdot \biggl( \int_0^T \int_0^t \normalnorm{A'(t)e^{-(t-s)A'(t)} g(t)}^{p'}_{X_{1-\theta,A'(t)}} \abs{t-s}^{p'(\frac{1}{p} + \alpha)} \d s \d t \biggr)^{1/p'} \\
			& \lesssim \norm{A}_{\dot{W}^{\alpha,p}} \norm{u}_{L^{\infty}([0,T];X_{\theta,A})} \biggl( \int_0^T \int_0^t (t-s)^{p'(\frac{1}{p} + \alpha + \theta - 2)} \d s \norm{g(t)}_X^{p'} \d t \biggr)^{1/p'}.
		\end{align*}
		The inner integral is finite because of the assumption $\alpha > 1 - \theta$. Since $g \in L^{p'}([0,T];X')$ is arbitrary, we get $A_{-1}(\cdot) S_1 u \in L^p([0,T];X)$. The case $p = \frac{1}{1-\theta}$ follows similarly using $L^q([0,T];X_{\theta,A})$ for some big $q$ and the fact that the condition $\alpha > 1 - \theta$ leaves a little room. Let us come to the second case $p \in (1, \frac{1}{1-\theta})$. Here Proposition~\ref{prop:bootstrapping} shows that $u \in L^{p/(1-p(1-\theta))}([0,T];X_{\theta,A})$. Hence, using Hölder's inequality, for~\eqref{eq:testing} and $\beta > 0$ we obtain the estimate
		\begin{align*}
			\MoveEqLeft \biggl( \int_0^T \int_0^T \frac{\norm{A(t) - A(s)}_{\mathcal{B}}^{\frac{1}{1-\theta}}}{\abs{t-s}^{1+ \alpha (1-\theta)^{-1}}} \d s \d t \biggr)^{1-\theta} \biggl( \int_0^T \int_0^t (t-s)^{p'(\alpha + \beta - 1)} \d s \norm{g(t)}_{X'}^{p'} \d t \biggr)^{1/p'} \\
			& \qquad \cdot \biggl( \int_0^T \int_s^T (t-s)^{-\frac{\beta p}{1-p(1-\theta)}} \d t \norm{u(s)}^{\frac{p}{1-p(1-\theta)}} \d s \biggr)^{\frac{1}{p} - (1-\theta)}.
		\end{align*}
		The last integral is finite for $\beta < \theta - \frac{1}{p'}$. Since $\alpha > 1 - \theta$, we can find $\beta \in (0, \theta - \frac{1}{p'})$ for which the second integral is finite as well.
		
		Further, $A_{-1}(\cdot)(S_2f)(\cdot)$ lies in $L^p([0,T];X)$ by Proposition~\ref{prop:s2_bounded}. This shows that the solution satisfies $u(t) \in D(A(t))$ for almost all $t \in [0,T]$ and $A(\cdot)u(\cdot) \in L^p([0,T];X)$. Since $u$ solves~\eqref{eq:wnacp}, it follows that $\dot{u} \in W^{1,p}([0,T];X)$. This finishes the proof in the case $u_0 = 0$. The case of general initial values $u_0 \in (D(A(0)),X)_{\frac{1}{p},p}$ follows from Proposition~\ref{prop:initial_values}. 
	\end{proof}
	
	\begin{remark}
		Compared to the result in~\cite[Theorem~3.3]{Fac16d} we need a weaker $\mathcal{R}$-boundedness result. Further, the time regularity is lowered to some fractional Sobolev regularity at the cost of more regularity on the domain spaces. In order to obtain maximal $L^p$-regularity for all $p \in [(1-\theta)^{-1},\infty)$ our result requires $\mathcal{A} \in \cap_{p \in [(1-\theta)^{-1}, \infty)} \cup_{\epsilon > 0} \dot{W}^{1-\theta+\epsilon,p}([0,T];\mathcal{B}(X_{\theta,A}, X_{\theta-1,A}))$. This is slightly less restrictive than the $\alpha$-Hölder continuity for some $\alpha > (1-\theta)^{-1}$ assumed usually.
	\end{remark}
	
	For non-autonomous problems given by sesquilinear forms on Hilbert spaces one obtains by the same line of thought the following improvement of~\cite{DieZac16}, where only the case $p = 2$ was treated.
	
	\begin{corollary}\label{cor:form}
		Let $V,H$ be Hilbert spaces with dense embedding $V \hookrightarrow H$ and let $a\colon [0,T] \times V \times V \to \IC$ be a non-autonomous bounded coercive sesquilinear form. Then the associated problem~\eqref{eq:nacp} on $H$ has maximal $L^p$-regularity
		\begin{thm_enum}
			\item for $p \in (1,2]$ provided $\mathcal{A} \in \dot{W}^{\frac{1}{2}+\epsilon,2}([0,T];\mathcal{B}(V,V'))$ for some $\epsilon > 0$,
			\item for $p \in [2,\infty)$ provided $\mathcal{A} \in \dot{W}^{\frac{1}{2}+\epsilon,p}([0,T];\mathcal{B}(V,V'))$ for some $\epsilon > 0$.
		\end{thm_enum}
	\end{corollary}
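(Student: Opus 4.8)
The plan is to deduce Corollary~\ref{cor:form} from Theorem~\ref{thm:mr} applied with $X = H$ and $\theta = \frac{1}{2}$, once it is checked that a bounded coercive non-autonomous form fits the abstract framework. Write $\mathcal{A}(t) \in \mathcal{B}(V,V')$ for the operator induced by $a(t,\cdot,\cdot)$ and $A(t)$ for its part in $H$. The first, easy, step is that $(A(t))_{t\in[0,T]}$ is uniformly $\mathcal{R}$-sectorial on the UMD space $H$: coercivity together with the embedding $V \hookrightarrow H$ places the numerical range of each $A(t)$ in a fixed sector $\{z : \abs{\Im z} \le \frac{M}{\epsilon}\Re z\}$ bounded away from $0$, which yields uniform sectorial resolvent bounds, and on a Hilbert space $\mathcal{R}$-boundedness coincides with boundedness with the same constant.

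The main step is the $\frac{1}{2}$-stability, i.e.\ the identification $X_{\frac12,A(t)} = V$ and $X_{-\frac12,A(t)} = V'$ with norm-equivalence constants depending only on $\epsilon$, $M$ and the embedding constant. For the upper scale one uses that the operator of a bounded coercive form on a Hilbert space has a bounded $H^\infty$-calculus with these uniform bounds, so that $X_{\frac12,A(t)} = [H,D(A(t))]_{\frac12} = D(A(t)^{1/2})$, together with the square root identity $D(A(t)^{1/2}) = V$. Applying this to the adjoint form $a^*(t,u,v) = \overline{a(t,v,u)}$, which is again bounded and coercive, gives $X_{\frac12,A(t)^*} = V$, and duality on the reflexive space $H$ (as recorded before Proposition~\ref{prop:fundamental_identity}) yields $X_{-\frac12,A(t)} = (X_{\frac12,A(t)^*})' = V'$, uniformly in $t$. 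Consequently $A_{-1}(t)$ restricted to $X_{\frac12,A(t)} = V$ is exactly $\mathcal{A}(t)\colon V \to V'$, so the regularity hypothesis $A_{-1} \in \dot{W}^{\alpha,q}([0,T];\mathcal{B}(X_{\frac12,A},X_{-\frac12,A}))$ of Theorem~\ref{thm:mr} is literally the assumed regularity $\mathcal{A} \in \dot{W}^{\alpha,q}([0,T];\mathcal{B}(V,V'))$.

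It then remains to specialize Theorem~\ref{thm:mr} to $\theta = \frac12$, where $1-\theta = \frac12$: case~(a) applies for $p \in (1,2)$ with $q = 2$ and any $\alpha > \frac12$, and case~(b) for $p \in [2,\infty)$ with $q = p$ and any $\alpha > \frac12$. Taking $\alpha = \frac12+\epsilon$ and observing that the endpoint $p = 2$ is covered by case~(b) with $q = 2$, one obtains exactly the two claimed statements: $p \in (1,2]$ under $\mathcal{A} \in \dot{W}^{\frac12+\epsilon,2}([0,T];\mathcal{B}(V,V'))$ and $p \in [2,\infty)$ under $\mathcal{A} \in \dot{W}^{\frac12+\epsilon,p}([0,T];\mathcal{B}(V,V'))$; Theorem~\ref{thm:mr} moreover provides the extra regularity $u(t) \in (D(A(t)),H)_{1/p,p}$ and the maximal regularity estimate.

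I expect the only delicate point to be the uniform identification of the interpolation and extrapolation scales in the second step, in particular justifying $D(A(t)^{1/2}) = V$ (immediate for symmetric forms and, in particular, for the elliptic operators treated in Section~\ref{sec:applications}) and that the underlying $H^\infty$-calculus constants are controlled purely by $\epsilon$ and $M$; the remainder is bookkeeping and the substitution $\theta = \frac12$.
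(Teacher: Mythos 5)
There is a genuine gap in your argument, and it is precisely the one the paper flags in the remark immediately following Corollary~\ref{cor:form}. You reduce $\frac12$-stability to the identity $D(A(t)^{1/2}) = V$, i.e.\ to the Kato square root property. But this is \emph{false} for general bounded coercive (non-symmetric) sesquilinear forms on a Hilbert space: McIntosh's counterexample produces an $m$-sectorial operator $A$ coming from a bounded coercive form with $D(A^{1/2}) \neq V$. You notice the issue yourself when you write that $D(A(t)^{1/2}) = V$ is ``immediate for symmetric forms'', but the corollary is stated for general bounded coercive forms, where it fails. Consequently the complex-interpolation space $X_{\frac12,A(t)} = [H,D(A(t))]_{1/2}$ need not coincide with $V$, so the hypothesis of Theorem~\ref{thm:mr} — $\theta$-stability in the sense of Definition~3.1, which is formulated for the complex interpolation scale — is not verified, and Theorem~\ref{thm:mr} cannot be invoked as a black box. (The duality identification $X_{-\frac12,A(t)} = V'$ inherits the same problem through the adjoint form.)

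The paper's intended route avoids this entirely. Its proof of the corollary says ``repeat the previous proof for $X=H$, $X_{\frac12,A}=V$, $X_{-\frac12,A}=V'$'', and the remark adds that $V,V'$ only agree with the complex interpolation spaces under the square root property, \emph{but that this is not necessary to carry out the argument}. In other words, one does not prove that the Gelfand triple $V \hookrightarrow H \hookrightarrow V'$ coincides with the complex interpolation/extrapolation scale; one redoes the proof of Theorem~\ref{thm:mr} with the Gelfand scale substituted throughout. This is consistent with the comment at the start of Section~2 that the arguments work for ``several other, but not all, scales of interpolation and extrapolation spaces.'' For a form the relevant ingredients are available directly on the Gelfand scale: the induced operator $\mathcal{A}(t)\colon V \to V'$ is uniformly sectorial there (coercivity plus boundedness), which on a Hilbert space also gives the $\mathcal{R}$-sectorial estimates, and the mapping properties of $e^{-\tau\mathcal{A}(t)}$ between $V'$, $H$ and $V$ with the exponents $\theta = \frac12$ follow from the standard $V$--$V'$ semigroup estimates without invoking $D(A(t)^{1/2})=V$. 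So the correct fix is to replace your second step by this substitution, not to try to establish the Kato square root property. The remaining bookkeeping in your third step (the specialization $\theta = \frac12$, $1-\theta = \frac12$, case (a) for $p \in (1,\frac{1}{1-\theta}) = (1,2)$ with $q = 2$, case (b) for $p \ge 2$ with $q=p$, and $p=2$ covered by (b)) is fine.
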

	\begin{proof}
		Repeat the previous proof for $X = H$, $X_{\frac{1}{2},A} = V$ and $X_{-\frac{1}{2},A} = V'$.
	\end{proof}
	
	Note that $V$ and $V'$ only agree with the complex interpolation spaces $X_{\frac{1}{2},A(t)}$ and $X_{-\frac{1}{2},A(t)}$ if the operators $A(t)$ satisfy the so-called Kato square root property. However, this is not necessary to carry out the argument. In the Banach space setting the case $\theta = \frac{1}{2}$ is also of particular interest. We obtain the following corollary relevant for concrete applications (which holds for other values of $\theta$ as well).
	
	\begin{corollary}\label{cor:bip}
		Let $T > 0$ and $(A(t))_{t \in [0,T]}$ be uniformly sectorial operators on a UMD space $X$ such that for $\omega \in (0, \frac{\pi}{2})$ and $M > 0$ the imaginary powers satisfy 
		\begin{equation*}
			\normalnorm{A(t)^{is}} \le M e^{\omega \abs{s}}
		\end{equation*}
		uniformly for all $t \in [0,T]$ and $s \in \IR$. Further, suppose that there exist Banach spaces $X_{\frac{1}{2}}$ and $X_{-\frac{1}{2}}$ such that for all $t \in [0,T]$ the spaces $D(A(t)^{1/2})$ and $D(A(t)^{-1/2})$ agree with $X_{\frac{1}{2}}$ and $X_{-\frac{1}{2}}$ as vector spaces and such that the respective norms are uniformly equivalent for some constant $K > 0$. Then the associated non-autonomous Cauchy problem~\eqref{eq:nacp} has maximal $L^p$-regularity
		\begin{thm_enum}
			\item for $p \in (1,2]$ if $t \mapsto A_{-1}(t) \in \dot{W}^{\frac{1}{2}+\epsilon,2}([0,T];\mathcal{B}(X_{\frac{1}{2}},X_{-\frac{1}{2}}))$ for some $\epsilon > 0$,
			\item for $p \in [2,\infty)$ if $t \mapsto A_{-1}(t) \in \dot{W}^{\frac{1}{2}+\epsilon,p}([0,T];\mathcal{B}(X_{\frac{1}{2}},X_{-\frac{1}{2}}))$ for some $\epsilon > 0$.
		\end{thm_enum}
		The constants in the maximal $L^p$-regularity estimates only depend on $T$, $\epsilon$, $K$, the fractional Sobolev norm of $A_{-1}$ and the constants in the Hölder estimate. 
	\end{corollary}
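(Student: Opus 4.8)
The plan is to verify that $(A(t))_{t\in[0,T]}$ satisfies all the hypotheses of Theorem~\ref{thm:mr} with $\theta=\tfrac12$; once this is done the assertion follows immediately. Indeed, with $\theta=\tfrac12$ one has $(1-\theta)^{-1}=2$ and $\alpha:=\tfrac12+\epsilon>1-\theta$, so that case~(i) with $p\in(1,2)$ is exactly Theorem~\ref{thm:mr}(a) (with $q=(1-\theta)^{-1}=2$), whereas $p=2$ in case~(i) and all of case~(ii) are covered by Theorem~\ref{thm:mr}(b) (with $q=p$, noting $2\in[\tfrac1{1-\theta},\infty)$). The conclusions $u(t)\in(D(A(t)),X)_{1/p,p}$ and the maximal regularity estimate are then part of the statement of Theorem~\ref{thm:mr}.

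\emph{Uniform $\mathcal R$-sectoriality.} Each $A(t)$ is sectorial, hence invertible, so the powers $A(t)^{is}$ and $A(t)^{-1/2}$ are meaningful, and by assumption $\normalnorm{A(t)^{is}}\le Me^{\omega\abs{s}}$ with $\omega<\tfrac\pi2$, uniformly in $t$. On a UMD space an operator with bounded imaginary powers of power angle $\omega$ is $\mathcal R$-sectorial of angle at most $\omega$, with $\mathcal R$-sectoriality constant controlled only by $M$, $\omega$ and the sectoriality bound; see~\cite{KunWei04}. Hence $(A(t))_{t\in[0,T]}$ is uniformly $\mathcal R$-sectorial of some angle $<\tfrac\pi2$.

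\emph{$\tfrac12$-stability.} For a sectorial operator with bounded imaginary powers the complex interpolation--extrapolation scale coincides with the fractional power scale, $X_{\beta,A}\cong D(A^{\beta})$ for $\beta\in(0,1]$ and $X_{-\beta,A}\cong$ the completion of $X$ under $x\mapsto\normalnorm{A^{-\beta}x}$ for $\beta\in(0,1)$, the norm equivalence constants depending only on $M$, $\omega$ and the sectoriality bound. Applying this to $A(t)$ and, on $X_{-1,A(t)}$, to the extrapolated operator $A_{-1}(t)$ --- which is sectorial with the same constants and, being similar to $A(t)$ via the isometry $A_{-1}(t)\colon X\to X_{-1,A(t)}$, has bounded imaginary powers with the same bound $Me^{\omega\abs{s}}$ --- and combining with the hypothesis that $D(A(t)^{1/2})$ and $D(A(t)^{-1/2})$ agree with $X_{\frac12}$, $X_{-\frac12}$ as vector spaces with norms uniformly equivalent up to the constant $K$, we conclude that for every $t$ the spaces $X_{\frac12,A(t)}$ and $X_{-\frac12,A(t)}$ coincide as vector spaces with $X_{\frac12}$ and $X_{-\frac12}$, with uniform norm equivalence governed by a constant $K'$ depending only on $K$, $M$, $\omega$. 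Thus $(A(t))_{t\in[0,T]}$ is $\tfrac12$-stable with $X_{\frac12,A}:=X_{\frac12}$ and $X_{-\frac12,A}:=X_{-\frac12}$. Since these norms are equivalent to those of $X_{\frac12}$, $X_{-\frac12}$, the space $\mathcal B(X_{\frac12},X_{-\frac12})$ carries a norm equivalent to that of $\mathcal B(X_{\frac12,A},X_{-\frac12,A})$, so the assumption on $A_{-1}(\cdot)$ becomes $A_{-1}\in\dot W^{\alpha,q}([0,T];\mathcal B(X_{\theta,A},X_{\theta-1,A}))$ with $\theta=\tfrac12$, $\alpha=\tfrac12+\epsilon$ and $q=2$ (case~(i)) resp.\ $q=p$ (case~(ii)). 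Theorem~\ref{thm:mr} now applies as indicated and gives maximal $L^p$-regularity, the property $u(t)\in(D(A(t)),X)_{1/p,p}$ and the asserted estimate; the stated dependence of the constants is obtained by tracking the dependencies in Theorem~\ref{thm:mr}, the $\mathcal R$-sectoriality constant, the constant $K'$ and the Hölder constant arising from $\dot W^{\alpha,q}\hookrightarrow C^{\alpha-1/q}$ (finite since $\alpha>\tfrac1q$ in both cases) being controlled by $T$, $\epsilon$, $K$, $M$, $\omega$ and $\normalnorm{A_{-1}}_{\dot W^{\alpha,q}}$.

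\emph{Main obstacle.} The two genuinely nonroutine ingredients are the passage from bounded imaginary powers to $\mathcal R$-sectoriality on a UMD space with the power angle kept below $\tfrac\pi2$, and --- more delicate --- the \emph{uniform} identification of the complex interpolation and extrapolation spaces with the fractional power spaces. The latter is the crux: on the negative level $-\tfrac12$ one has to work with the extrapolated operators $A_{-1}(t)$ and ensure that every equivalence constant depends only on $M$, $\omega$ and the sectoriality bound, since any residual $t$-dependence there would destroy $\tfrac12$-stability.
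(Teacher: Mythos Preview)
Your proof is correct and follows essentially the same approach as the paper: verify uniform $\mathcal{R}$-sectoriality from the bounded imaginary powers assumption and $\tfrac12$-stability from the BIP identification of complex interpolation spaces with fractional power domains, then invoke Theorem~\ref{thm:mr} with $\theta=\tfrac12$. The paper cites \cite[Theorem~4.5]{DHP03} together with \cite[Proposition~2.6]{Wei01} for the first step and \cite[Proposition~2.5]{Fac15c} for the second, whereas you spell out the extrapolation argument for the negative level directly; both routes are equivalent.
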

	\begin{proof}
		Since the operators $A(t)$ have uniformly bounded imaginary powers, it follows from~\cite[Theorem~4.5]{DHP03} that for $\phi \in (\omega, \pi)$
		\begin{equation*}
			\sup_{t \in [0,T]} \R{ \lambda R(\lambda,A(t)): \lambda \not\in \overline{\Sigma_{\phi}}} < \infty
		\end{equation*}
		Since uniformly bounded analytic families are uniformly $\mathcal{R}$-bounded on compact subsets of a common domain~\cite[Proposition~2.6]{Wei01}, the operators $(A(t))_{t \in [0,T]}$ are uniformly $\mathcal{R}$-sectorial. Further, the fractional domains spaces $D(A(t)^{1/2})$ and $D(A(t)^{-1/2})$ are uniformly equivalent to $X_{1/2,A(t)}$ and $X_{-1/2,A(t)}$~\cite[Proposition~2.5]{Fac15c}. As a consequence the family $(A(t))_{t \in [0,T]}$ is $\frac{1}{2}$-stable. This means that we can apply Theorem~\ref{thm:mr}.
	\end{proof}
	
	\begin{remark}
		Corollary~\ref{cor:bip} holds under the slightly weaker assumption that the operators $(A(t))_{t \in [0,T]}$ are uniformly $\mathcal{R}$-sectorial. For this one uses the scale $X_{\theta,A} = D(A^{\theta})$ for $\abs{\theta} \in (0,1)$ and repeats the proof of Theorem~\ref{thm:mr}. The main difference is that one has to use~\cite[Lemma~6.9 (1)]{HHK06} instead of~Lemma~\ref{lem:fractional_r}.
	\end{remark}
	
	\section{Non-Autonomous Maximal Regularity For Elliptic Operators}
	
	In this section we illustrate the consequences of our results to non-autonomous problems governed by elliptic operators in divergence form. We do not present the most general framework here and concentrate on pure second order operators with $\VMO$-coefficients subject to Dirichlet boundary conditions as the used results are already involved and spread over the literature in this special case. However, we give some additional references to the literature. We start with introducing the appropriate function space.
	
	\begin{definition}
		Let $\Omega \subset \IR^n$ be a bounded domain. A bounded measurable function $f\colon \Omega \to \IC$ is of \emph{vanishing mean oscillation} if one has $\inf_{r > 0} \eta_f(r) = 0$ for the modulus
		\begin{equation*}
			\eta_f(r) = \sup_{B: d(B) \le r} \biggl( \frac{1}{\abs{B \cap \Omega}} \int_{B \cap \Omega} \abs{f(x) - f_{B \cap \Omega}}^2 \d x \biggr)^{1/2},
		\end{equation*}
		where $f_{\Omega \cap B}$ denotes the mean of $f$ over $\Omega \cap B$ and the supremum is taken over all balls $B$ of diameter $d(B)$ not exceeding $r$ and centered in $\Omega$.
	\end{definition}
	
	We need the following variant of the Kato square root property on $L^q$-spaces.
	
	\begin{theorem}\label{thm:kato}
		Let $n \in \IN$, $\Omega \subset \IR^n$ a bounded $C^1$-domain, $q \in (1,\infty)$, and $A = (a_{ij})_{1 \le i,j \le n} \in L^{\infty}(\Omega; \IC^{n \times n})$ complex-valued coefficients with
		\begin{equation*}
			\Re \sum_{i,j=1}^n a_{ij}(x) \xi_i \xi_j \ge \delta \abs{\xi}^2 \qquad \text{for all } \xi \in \IR^n
		\end{equation*}
		for some $\delta > 0$ and almost every $x \in \Omega$. Denote by $L_q$ the realization of $-\divv(A\nabla\cdot)$ on $L^q(\Omega)$ subject to Dirichlet boundary conditions. If $a_{ij} \in \VMO(\Omega)$ for all $i,j = 1, \ldots, n$, then there exists $\lambda_0 \ge 0$ such that $L_q + \lambda$ is a sectorial operator on $L^q(\Omega)$ for all $\lambda \ge \lambda_0$ and 		
		\begin{equation*}
			\norm{f}_q + \norm{\nabla f}_q \simeq \normalnorm{(L_q + \lambda)^{1/2} f}_q \qquad \text{for all } f \in W_0^{1,q}(\Omega).
		\end{equation*}
		Moreoever, $\lambda_0$ only depends on $n$, $\Omega$, $q$, $\eta_{a_{ij}}$, $\delta$ and $\norm{A}_{\infty}$. Further, with an additional dependence on $\lambda$ the same holds for the constant in the above equivalence and the sectorial estimates of the operators $L_q + \lambda$.
	\end{theorem}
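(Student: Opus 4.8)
The strategy is to combine the $L^2$ Kato square root theorem with $L^q$-extrapolation techniques, exploiting the VMO hypothesis to upgrade from $L^2$ to all $q \in (1,\infty)$. First I would fix the coercive form $a(u,v) = \int_\Omega A\nabla u \cdot \overline{\nabla v}$ on $V = W_0^{1,2}(\Omega)$ with $H = L^2(\Omega)$; by the solution of the Kato conjecture for elliptic operators in divergence form (Auscher--Hofmann--Lacey--McIntosh--Tchamitchian, and the earlier one-dimensional and Gaussian-bounded cases), for a suitable shift $\lambda_0$ the operator $L_2 + \lambda$ is sectorial of some angle $<\pi/2$ for $\lambda \ge \lambda_0$, satisfies a bounded $H^\infty$-calculus, and one has the square function equivalence $\norm{f}_2 + \norm{\nabla f}_2 \simeq \norm{(L_2+\lambda)^{1/2}f}_2$ on $W_0^{1,2}(\Omega)$. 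The constants here depend only on $n$, $\Omega$, $\delta$, $\norm{A}_\infty$, which is already consistent with the claimed dependencies (at this stage $\VMO$ is not needed).

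Next I would bring in the VMO assumption to extrapolate the sectoriality and the square root equivalence to $L^q(\Omega)$ for all $q \in (1,\infty)$. The key input is that under a uniform VMO modulus on the coefficients, the elliptic operator $L_q + \lambda$ (with Dirichlet conditions on a bounded $C^1$-domain) generates a bounded analytic semigroup on $L^q(\Omega)$ with $L^q$-resolvent bounds and, in fact, has $L^q$-maximal regularity / a bounded $H^\infty$-calculus on $L^q$: this follows from the $\VMO$-coefficient theory for divergence-form operators (see the references cited in the paper, e.g.~\cite{Fac15c} and the literature therein, as well as classical $L^q$-theory à la Auscher--Tchamitchian when coefficients are continuous, combined with a localization/freezing-the-coefficients argument that VMO makes work). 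From the bounded $H^\infty$-calculus on $L^q$ one gets the two-sided estimate $\norm{(L_q+\lambda)^{1/2}f}_q \simeq \norm{(L_q+\lambda)^{1/2}f}_q$ trivially, so the real content is the comparison $\norm{(L_q+\lambda)^{1/2}f}_q \simeq \norm{f}_q + \norm{\nabla f}_q$. For the direction $\norm{(L_q+\lambda)^{1/2}f}_q \lesssim \norm{f}_q + \norm{\nabla f}_q$ (the ``hard'' Kato direction on $L^q$), I would invoke the $L^q$-boundedness of the Riesz transform $\nabla (L_q+\lambda)^{-1/2}$, which for VMO coefficients on $C^1$-domains holds for the full range $q \in (1,\infty)$ — this is where VMO is essential, since for merely bounded measurable coefficients the Riesz transform is only bounded on a restricted interval around $2$. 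The reverse inequality $\norm{f}_q + \norm{\nabla f}_q \lesssim \norm{(L_q+\lambda)^{1/2}f}_q$ is the ``easy'' direction, following from the $L^q$-boundedness of $(L_q+\lambda)^{1/2}(L_q'+\lambda)^{-1/2}$-type operators and duality, or directly from the reverse Riesz estimate; again VMO + $C^1$-domain gives the full range.

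Finally I would track the dependence of the constants. The shift $\lambda_0$ and the various operator-norm bounds enter through: (i) the $L^2$ Kato theorem (depending on $n,\Omega,\delta,\norm{A}_\infty$); (ii) the VMO-coefficient $L^q$-theory, where the quantitative statements depend on the modulus $\eta_{a_{ij}}$ (one needs $\eta_{a_{ij}}(r)$ small on a scale determined by the other parameters), $q$, $n$, $\Omega$, $\delta$, $\norm{A}_\infty$; and (iii) the fixed shift $\lambda$ once chosen $\ge \lambda_0$, which affects the sectorial constant and the equivalence constant but not $\lambda_0$ itself. The main obstacle is step (ii): assembling from the literature a clean statement that, under a \emph{uniform} VMO modulus, both the sectoriality/resolvent bounds on $L^q$ \emph{and} the two-sided Riesz transform estimates hold with constants depending only on the listed data — the individual ingredients are known (interior and boundary $W^{1,q}$-estimates for VMO-coefficient divergence-form operators, perturbation from the constant-coefficient case via the $\VMO$ oscillation being controllable, and the transference of the square-root equivalence), but verifying the uniformity of constants (so that the result can later be applied to the \emph{non-autonomous} family $(L_q(t))_t$ in Theorem~\ref{thm:mr_elliptic}) requires care. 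I expect the proof in the paper to cite the relevant $\VMO$-elliptic-regularity results and the $L^2$-Kato theorem and then argue the extrapolation and constant-tracking directly.
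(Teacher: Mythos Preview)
Your overall strategy---$L^2$ Kato square root plus $L^q$-extrapolation via the $\VMO$ hypothesis---matches the paper's, and you correctly identify constant-tracking as the main bookkeeping issue. However, two points deserve correction or comparison.

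First, you have the Riesz transform direction reversed. Boundedness of $\nabla(L_q+\lambda)^{-1/2}$ on $L^q$ gives $\norm{\nabla f}_q \lesssim \norm{(L_q+\lambda)^{1/2}f}_q$, not the inequality you assign to it. To obtain $\norm{(L_q+\lambda)^{1/2}f}_q \lesssim \norm{f}_q + \norm{\nabla f}_q$ from a Riesz-type bound you would need the adjoint estimate (Riesz transform for $L_{q'}(A^T)$) combined with duality, which you do not spell out. The paper avoids this issue by citing \cite[Theorem~4]{AusTch01} directly for the estimate $\norm{(L_q+\lambda)^{1/2}f}_q \lesssim \norm{f}_q + \norm{\nabla f}_q$.

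Second, the paper's route to the converse inequality $\norm{f}_q + \norm{\nabla f}_q \lesssim \norm{(L_q+\lambda)^{1/2}f}_q$ is not via Riesz transforms but via the divergence-form elliptic regularity estimate $(L_q+\lambda)^{-1}\colon W^{-1,q}(\Omega) \to W_0^{1,q}(\Omega)$, which is exactly where $\VMO$ enters decisively; the paper cites \cite[Theorem~4]{DonKim10} for this, with the constant dependence readable from \cite[Section~7]{DonKim10}. This is more direct than assembling two-sided Riesz bounds and makes the uniform dependence on $\eta_{a_{ij}}$ transparent. A further structural difference: rather than invoking an abstract ``$\VMO$ $L^q$-theory'' for sectoriality and $H^\infty$-calculus, the paper first obtains \emph{Gaussian estimates} from $\VMO$ via \cite[Theorem~7]{AusTch01b}, then uses these both to define $L_q$ on $L^q(\Omega)$ and (later, in Theorem~\ref{thm:mr_elliptic}) to get bounded imaginary powers via \cite{DuoRob96}. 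Your tautological sentence about the $H^\infty$-calculus giving $\norm{(L_q+\lambda)^{1/2}f}_q \simeq \norm{(L_q+\lambda)^{1/2}f}_q$ should be deleted; presumably you meant that the calculus gives well-definedness of the square root, but that is not the issue here.
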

	\begin{proof}
		Under the made assumptions the operator $L_2$ -- which can be defined for arbitrary complex elliptic coefficients by the method of forms -- satisfies local Gaussian estimates~\cite[Theorem~7]{AusTch01b}. Although not explicitly stated, the coefficients in the estimate only depend on the claimed constants. This has several consequences. First, for $\lambda$ sufficiently large the operator $L_2 + \lambda$ satisfies global Gaussian estimates~\cite[Section~1.4.5, Theorem~18]{AusTch98} and extends to a sectorial operator $L_q + \lambda$ on $L^q(\Omega)$. Secondly, it essentially follows from~\cite[Theorem~4]{AusTch01} that $\normalnorm{(L_q + \lambda)^{1/2}}_q \lesssim \norm{f}_q + \norm{\nabla f}_q$. Here are two additional points to consider. First, the theorem is only stated in the case $\lambda = 0$. The case $\lambda \neq 0$ can be obtained by including terms of lower order in the argument or by arguing as in~\cite[p.~135]{AusTch98}. The second point is -- as always -- the not explicitly stated dependence on the constants. However, taking a close look at the proof in~\cite{AusTch01} one sees that most auxiliary results give the explicit dependence on the constants (on \cite[p.~162]{AusTch01} such a dependence is explicitly stated in a special case). One crucial point needed here is the dependence in the case $p = 2$ which is well-known. This can be found in~\cite[Theorem~1]{AKM06} for a broad class of Lipschitz domains and a combination of~\cite[Theorem~4.2]{EgeHalTol14} and~\cite[Theorems~3.1~\&~3.3 and Section~6]{EgeHalTol16} yields the dependence for general bounded Lipschitz domains.
		
		Now, as in~\cite[p.~135]{AusTch98}, the converse inequality follows if $(L_q + \lambda)^{-1}$ extends to a bounded operator from $W^{-1,q}(\Omega) = (W_0^{1,q'}(\Omega))'$ into $W_0^{1,q}(\Omega)$. Notice that
		\begin{align*}
			\norm{u}_{W^{-1,q}(\Omega)} = \inf \biggl\{ \norm{g}_q + \sum_{k=1}^n \norm{F_k}_q: g, F_k \in L^q(\Omega) \text{ and } \divv F + g = u \biggr\}.  
		\end{align*}
		It is shown in~\cite[Theorem~4]{DonKim10} that for $\lambda \ge 0$ there exists $C \ge 0$ such that for all $F_k,g \in L^q(\Omega)$ there is a unique $u \in W_0^{1,q}(\Omega)$ with $-\divv(A \nabla u) + \lambda u = \divv F + g$ and
		\begin{equation*}
			\norm{u}_{W^{1,q}(\Omega)} \le C \biggl( \norm{g}_q + \sum_{k=1}^n \norm{F_k}_q \biggr).
		\end{equation*} 
		Here, our required dependence on the constants can be read of the lemmata in~\cite[Section~7]{DonKim10}. Note that the above estimate is exactly the boundedness of $(1+L_q)^{-1}\colon W^{-1,q}(\Omega) \to W_0^{1,q}(\Omega)$. This finishes the proof.
	\end{proof}
	
	\begin{remark}
		The estimate $\normalnorm{L^{1/2}f}_p \lesssim \norm{f}_p$ is known under more general assumptions on the coefficients and the domain~\cite[Theorem~4]{AusTch01}. The same holds for the boundedness of $(L_q + \lambda)^{-1}\colon W^{-1,q}(\Omega) \to W_0^{1,q}(\Omega)$ for which originating from~\cite{Kry07b} many results have been obtained in the last years. For a complete list of references we refer to the introduction of~\cite{DonKim16} and for a proof of similar results within the framework of maximal regularity to~\cite{GalVer14} and~\cite{GalVer15}.
	\end{remark}
	
	\begin{theorem}\label{thm:mr_elliptic}
		Let $\Omega \subset \IR^n$ be a bounded $C^1$-domain, $T > 0$ and $a_{ij} \in L^{\infty}([0,T] \times \Omega)$ for $i, j = 1, \ldots, n$. Assume further that the following properties are satisfied.
		\begin{thm_enum}
			\item There exists $\delta > 0$ such that for almost all $(t,x) \in [0,T] \times \Omega$ and all $\xi \in \IR^n$
				\begin{equation*}
					\Re \sum_{i,j=1}^n a_{ij}(t,x) \xi_i \overline{\xi_j} \ge \delta \abs{\xi}^2.
				\end{equation*}
			\item\label{thm:mr_elliptic:vmo} The functions $x \mapsto a_{ij}(t,x)$ lie in $\VMO(\Omega)$ and there is $\eta\colon [0,1] \to [0,\infty]$ with $\lim_{r \downarrow 0} \eta(r) = 0$ and $\eta_{a_{ij}(t,\cdot)} \le \eta$ for all $t \in [0,T]$ and $i,j = 1, \ldots, n$.
		\end{thm_enum}
		For $q \in (1, \infty)$ let $L_q(A(t)) = L_q(t) = -\divv(A(t)\nabla \cdot)$ be the corresponding sectorial operators on $L^q(\Omega)$. Then for all $q \in (1, \infty)$ the non-autonomous problem~\eqref{eq:nacp} associated to $(L_q(t))_{t \in [0,T]}$ has maximal $L^p$-regularity
		\begin{thm_enum}
			\item for $p \in (1,2]$ if $t \mapsto a_{ij}(t,\cdot) \in \dot{W}^{\frac{1}{2}+\epsilon,2}([0,T];L^{\infty}(\Omega))$ for some $\epsilon > 0$,
			\item for $p \in [2,\infty)$ if $t \mapsto a_{ij}(t,\cdot) \in \dot{W}^{\frac{1}{2}+\epsilon,p}([0,T];L^{\infty}(\Omega))$ for some $\epsilon > 0$.
		\end{thm_enum}
		The maximal $L^p$-regularity estimate depends only on $p, q, n, T, \Omega, \delta, \eta, \epsilon,\norm{a_{ij}}_{\infty}$ and the homogenous Sobolev norm in (a) or (b).
	\end{theorem}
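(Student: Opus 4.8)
The plan is to fix $q \in (1,\infty)$, verify the hypotheses of Corollary~\ref{cor:bip} for the \emph{shifted} family $(L_q(t)+\lambda_0)_{t \in [0,T]}$ on the UMD space $X = L^q(\Omega)$ with $X_{\frac12} = W_0^{1,q}(\Omega)$ and $X_{-\frac12} = W^{-1,q}(\Omega) = (W_0^{1,q'}(\Omega))'$, apply the corollary, and then remove the shift by an exponential substitution. All that has to be checked is: uniform sectoriality and $\tfrac12$-stability of $(L_q(t)+\lambda_0)$, uniformly bounded imaginary powers with angle $<\tfrac\pi2$, and the fractional Sobolev regularity of $t\mapsto A_{-1}(t)$.

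First I would fix the shift. Applying Theorem~\ref{thm:kato} to the frozen coefficient matrix $A(t)$ for each $t$, the threshold $\lambda_0 \ge 0$ can be chosen \emph{uniformly} in $t \in [0,T]$: it depends only on $n,\Omega,q,\delta,\|A(t)\|_\infty$ and the $\VMO$-modulus, and by hypothesis~\ref{thm:mr_elliptic:vmo} the moduli $\eta_{a_{ij}(t,\cdot)}$ are all dominated by the single function $\eta$ while $\sup_{t}\norm{a_{ij}(t,\cdot)}_{L^\infty(\Omega)} \le \norm{a_{ij}}_{L^\infty([0,T]\times\Omega)}$. Hence $L_q(t)+\lambda_0$ are uniformly sectorial on $L^q(\Omega)$, and Theorem~\ref{thm:kato} yields the Kato equivalence $\norm{f}_q + \norm{\nabla f}_q \simeq \normalnorm{(L_q(t)+\lambda_0)^{1/2}f}_q$ with constants uniform in $t$, i.e.\ $D((L_q(t)+\lambda_0)^{1/2}) = W_0^{1,q}(\Omega)$ uniformly. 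Dualizing, using that $L_q(t)+\lambda_0$ on $L^q$ and $L_{q'}(t)+\lambda_0$ on $L^{q'}$ are adjoint and that $(L_q(t)+\lambda_0)^{-1}\colon W^{-1,q}(\Omega) \to W_0^{1,q}(\Omega)$ is bounded uniformly in $t$ (also recorded in Theorem~\ref{thm:kato}), one obtains $D((L_q(t)+\lambda_0)^{-1/2}) = W^{-1,q}(\Omega)$ with uniform equivalence constants. This is the required $\tfrac12$-stability.

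Next I would check the uniformly bounded imaginary powers. On $L^2(\Omega)$ the operator $L_2(t)+\lambda_0$ is associated with a bounded coercive form whose numerical range lies in a sector of half-angle $\omega < \tfrac\pi2$ depending only on $\delta$ and $\sup_t\norm{A(t)}_\infty$, so it is sectorial of angle $\omega$ and has a bounded $H^\infty(\Sigma_\sigma)$-calculus for each $\sigma \in (\omega,\tfrac\pi2)$. Since, as used in the proof of Theorem~\ref{thm:kato}, $L_2(t)+\lambda_0$ satisfies global Gaussian bounds with constants controlled by the quantities there, these bounds transfer the $H^\infty$-calculus, and in particular the imaginary powers, to $L^q(\Omega)$ without enlarging the angle (see the heat-kernel functional calculus results behind~\cite{AusTch98,AusTch01}, and~\cite[Theorem~4.5]{DHP03}); thus $\normalnorm{(L_q(t)+\lambda_0)^{is}}_{\mathcal{B}(L^q)} \le M e^{\sigma\abs{s}}$ uniformly in $t \in [0,T]$, $s \in \IR$, with $\sigma < \tfrac\pi2$. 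Finally, writing $A_{-1}(t)u = -\divv(A(t)\nabla u) + \lambda_0 u$ as an operator $W_0^{1,q}(\Omega) \to W^{-1,q}(\Omega)$, the difference $A_{-1}(t) - A_{-1}(s) = -\divv((A(t)-A(s))\nabla u)$ has norm $\lesssim \max_{i,j}\norm{a_{ij}(t,\cdot)-a_{ij}(s,\cdot)}_{L^\infty(\Omega)}$, so $t\mapsto A_{-1}(t)$ inherits $\dot{W}^{\frac12+\epsilon,2}$- (resp.\ $\dot{W}^{\frac12+\epsilon,p}$-) regularity from the coefficients, with norm bounded by $\sum_{i,j}\norm{a_{ij}}_{\dot W^{\frac12+\epsilon,2}([0,T];L^\infty(\Omega))}$ (resp.\ with $p$). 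Corollary~\ref{cor:bip} now applies to $(L_q(t)+\lambda_0)_{t\in[0,T]}$ and yields maximal $L^p$-regularity in the two stated ranges, with constant depending only on the admissible data (including $\lambda_0$, hence still only on that data).

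It remains to pass from the shifted to the original family: $v$ solves $\dot v + L_q(t)v = g$, $v(0)=u_0$, if and only if $u := e^{-\lambda_0\,\cdot}\,v$ solves $\dot u + (L_q(t)+\lambda_0)u = e^{-\lambda_0\,\cdot}\,g$, $u(0)=u_0$, and since $D(L_q(t)+\lambda_0)=D(L_q(t))$ with uniformly equivalent graph norms the relevant Bochner spaces and the trace spaces $(D(L_q(0)),L^q(\Omega))_{1/p,p}$ coincide up to constants controlled by $e^{\lambda_0 T}$; thus maximal $L^p$-regularity transfers to $(L_q(t))_{t\in[0,T]}$, the estimate enlarged only by a factor controlled by $e^{\lambda_0 T}$, and the membership $u(t) \in (D(L_q(t)),L^q(\Omega))_{1/p,p}$ is inherited from Corollary~\ref{cor:bip}. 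I expect the genuine difficulty to be purely bookkeeping: ensuring that the functional-calculus angle on $L^q$ is really $<\tfrac\pi2$ and that \emph{all} constants above — the Gaussian-bound constants, the $H^\infty$-bound on $L^q$, the Kato square-root constant and $\lambda_0$ itself — depend only on $p,q,n,T,\Omega,\delta,\eta,\epsilon,\norm{a_{ij}}_\infty$ and the relevant homogeneous Sobolev norm, which is exactly the information Theorem~\ref{thm:kato} was formulated to provide but which the cited sources often leave implicit.
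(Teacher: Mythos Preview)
Your proposal is correct and follows essentially the same route as the paper: verify uniform bounded imaginary powers via the Gaussian estimates behind Theorem~\ref{thm:kato}, establish $\tfrac12$-stability through the Kato square-root identification $D((L_q(t)+\lambda_0)^{1/2})\simeq W_0^{1,q}(\Omega)$ and its dual, read off the fractional Sobolev regularity of $A_{-1}$ from that of the coefficients, apply Corollary~\ref{cor:bip}, and undo the shift by the exponential rescaling. One small correction: the $L^{q'}$-adjoint of $L_q(t)$ is $-\divv(A(t)^{T}\nabla\cdot)$, not $L_{q'}(t)=-\divv(A(t)\nabla\cdot)$ as you wrote; the paper makes this explicit and applies Theorem~\ref{thm:kato} to the transposed coefficients (which satisfy the same hypotheses) before dualizing, so your argument goes through once this is adjusted.
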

	\begin{proof}
		Thanks to the Gaussian estimates discussed in the proof of Theorem~\ref{thm:kato}, the operators $L_q(t) + \lambda$ have uniformly bounded imaginary powers for some $\omega \in (0, \frac{\pi}{2})$ and for sufficiently large $\lambda$. This follows from the general result~\cite[Theorem~4.3]{DuoRob96} (which even gives a bounded $H^{\infty}$-calculus), which does not state the dependence on the constants explicitly.
		Further, it follows from~Theorem~\ref{thm:kato} that $D(L_q(A(t)) + \lambda)^{1/2}) \simeq W_0^{1,q}(\Omega)$ holds uniformly in $t \in [0,T]$. The coefficients $A(t)^T$ satisfy the same assumptions and one therefore has $D(L_{q'}(A(t)^T) + \lambda)^{1/2}) \simeq W_0^{1,q'}(\Omega)$ uniformly as well. For fixed $t \in [0,T]$ let $L_*^{1/2}$ be the adjoint of $(L_{q'}(A(t)^T) + \lambda)^{1/2}$. Then $L_*^{1/2}\colon L^q \to (W_0^{1,q'}(\Omega))' = W^{-1,q}(\Omega)$ extends $(L_q(t)+\lambda)^{1/2}$ and is an isomorphism. Consequently, one has for $u \in L^q(\Omega)$
		\begin{equation*}
			\normalnorm{(L_q(t) + \lambda)^{-1/2} u}_q = \normalnorm{(L_*^{1/2})^{-1} u}_q \simeq \norm{u}_{W^{-1,q}(\Omega)}.
		\end{equation*}
		Hence, $D((L_q(t) + \lambda)^{-1/2}) \simeq W^{-1,q}(\Omega)$ uniformly in $t \in [0,T]$. Therefore $X_{\frac{1}{2}} = W^{1,q}(\Omega)$ and $X_{-\frac{1}{2}} = W^{1,q'}(\Omega)$ in Corollary~\ref{cor:bip}. It remains to check the time regularity. For $u \in W_0^{1,2}(\Omega) \cap W_0^{1,q}(\Omega)$ and $v \in W_0^{1,2}(\Omega) \cap W_0^{1,q'}(\Omega)$ one has
		\begin{align*}
			\MoveEqLeft \normalabs{\langle L_q(t)u - L_q(s)u, v \rangle} = \abs{\int_{\Omega} (A(t) - A(s)) \nabla u \nabla v} \le \norm{A(t)-A(s)}_{\infty} \norm{\nabla u}_q \norm{\nabla v}_{q'}.
		\end{align*}
		By density this extends to all $u \in W_0^{1,q}(\Omega)$ and all $v \in W_0^{1,q'}(\Omega)$. It follows from the assumption that $L_q(\cdot) + \lambda \in \dot{W}^{\alpha,r}([0,T];\mathcal{B}(W_0^{1,q}(\Omega), W^{-1,q}(\Omega)))$ with $\alpha$ and $r$ as in the assumptions. Now, Corollary~\ref{cor:bip} applies and yields the maximal $L^p$-regularity of $(L_q(t) + \lambda)_{t \in [0,T]}$ for $\lambda$ big enough. By a rescaling argument this is equivalent to the maximal $L^p$-regularity of $(L_q(t))_{t \in [0,T]}$.
	\end{proof}
	
\section{Applications to Quasilinear Parabolic Problems}\label{sec:applications}

	In this section we use Theorem~\ref{thm:mr_elliptic} to solve quasilinear parabolic equations.
	\begin{theorem}
		Let $\Omega \subset \IR^n$ be a bounded $C^1$-domain and $T > 0$. For coefficients $A = (a_{ij})\colon \IC \to \IC^n$, $p \in [2, \infty)$, $q \in (1, \infty)$, an inhomogeneous part $f \in L^p([0,T];L^q(\Omega))$ and an initial value $u_0 \in L^q(\Omega)$ satisfying the condition $u_0 \in (D(-\divv A(u_0) \nabla \cdot),L^q(\Omega))_{\frac{1}{p},p}$ consider the problem
    	\begin{equation*}
    		\label{eq:QLP}
    		\tag{QLP}
    		\LeftEqNo
    		\left\{
    		\begin{aligned}
    			\frac{\partial}{\partial t} u(t,x)  -\divv(A(u(t,x)) \nabla u(t,x)) & = f(t,x) \\
    			u(0) & = u_0.
    		\end{aligned}
    		\right.
    	\end{equation*}
		Suppose that the following assumptions are satisfied.
		\begin{thm_enum}
			\item The coefficients $a_{ij}$ are $\beta$-Hölder continuous for some $\beta > \frac{1}{2}$.
			\item For all $M > 0$ there exist $\lambda(M) > 0$ such that for all $\abs{u} \le M$
				\begin{equation*}
					\Re \sum_{i,j=1}^n a_{ij}(u) \xi_i \overline{\xi_j} \ge \lambda(M) \abs{\xi}^2 \qquad \text{for all } \xi \in \IR^n.
				\end{equation*} 
		\end{thm_enum}
		If $q > n$ and $\beta > \frac{q}{2q - n}$, then there exists $C \ge 0$ such that for 
		\begin{equation*}
			\norm{f}_{L^p([0,T];L^q(\Omega))} + \norm{u_0}_{(D(-\divv A(u_0) \nabla \cdot),L^q(\Omega))_{\frac{1}{p},p}} \le C
		\end{equation*}
		the quasilinear problem \eqref{eq:QLP} has a solution 
		\begin{equation*}
			u \in W^{1,p}([0,T];L^q(\Omega)) \cap \BUC([0,T] \times \overline{\Omega})
		\end{equation*} 
		with $u(t) \in D(-\divv A(u(t,\cdot)) \nabla \cdot)$ for almost all $t \in [0,T]$ and $-\divv A(u) \nabla u \in L^p([0,T];L^q(\Omega))$. A fortiori, $u \in C^{\alpha - \frac{1}{p}}([0,T];C^{2(1-\alpha) - \frac{n}{q}}(\Omega))$ for $\alpha \in (\frac{1}{p},1-\frac{n}{2q})$.
	\end{theorem}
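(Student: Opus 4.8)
The plan is to solve~\eqref{eq:QLP} by a Schauder fixed point argument whose linearisation is governed by Theorem~\ref{thm:mr_elliptic}; Schauder rather than a contraction is forced on us since $A$ is only Hölder continuous and no uniqueness is claimed. For $v$ with $v(0,\cdot) = u_0$ in a suitable ball, let $\Phi(v)$ be the unique maximal $L^p$-regularity solution on $[0,T]$ of the frozen-coefficient problem
\[
	\dot u(t) - \divv\bigl(A(v(t,\cdot))\nabla u(t)\bigr) = f(t), \qquad u(0) = u_0 .
\]
A fixed point of $\Phi$ is precisely a solution of~\eqref{eq:QLP} in the maximal regularity class $\mathbb{E}$ with the asserted properties. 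Observe that along the iteration the operator at $t = 0$ is always $-\divv(A(u_0)\nabla\cdot)$, so the compatibility assumption $u_0 \in (D(-\divv A(u_0)\nabla\cdot),L^q(\Omega))_{1/p,p}$ is exactly the one required for $\Phi$ to be defined; since $q>n$ this interpolation space embeds into $C^\mu(\overline\Omega)$ for some $\mu>0$, so $A(u_0)\in C^{\beta\mu}(\overline\Omega)\subset\VMO(\Omega)$ and the ball will be non-empty.

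The heart of the argument is to verify, with constants uniform over the ball, that the frozen coefficients $a_{ij}(v(\cdot,\cdot))$ fulfil the hypotheses of Theorem~\ref{thm:mr_elliptic}(b), equivalently of Corollary~\ref{cor:bip} with $X_{1/2}=W^{1,q}_0(\Omega)$ and $X_{-1/2}=W^{-1,q}(\Omega)$. I would take the ball $B_R$ in $\dot W^{\sigma,p}([0,T];L^\infty(\Omega))\cap C([0,T];C^\mu(\overline\Omega))$ with $\mu>0$ small and $\sigma\in\bigl(\max\{\tfrac1p,\tfrac1{2\beta}\},\,1-\tfrac n{2q}\bigr)$; this interval is non-empty because $q>n$ gives $1-\tfrac n{2q}>\tfrac12\ge\tfrac1p$ and because $\beta>\tfrac q{2q-n}$ is equivalent to $\beta\bigl(1-\tfrac n{2q}\bigr)>\tfrac12$. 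For $v\in B_R$: the bound $\|v\|_{L^\infty([0,T]\times\Omega)}\le M$ (using $\sigma>\tfrac1p$) gives uniform ellipticity with $\delta=\lambda(M)$; the family $\{v(t,\cdot)\}_t$ is bounded in $C^\mu(\overline\Omega)$, hence $\{a_{ij}(v(t,\cdot))\}_t$ is bounded in $C^{\beta\mu}(\overline\Omega)$, giving a uniform $\VMO$-modulus $\eta$ and $L^\infty$-bound; and, crucially, from $\norm{A(v(t,\cdot))-A(v(s,\cdot))}_\infty\le[A]_{C^\beta}\norm{v(t,\cdot)-v(s,\cdot)}_\infty^{\beta}$ and a change of exponents in the Gagliardo double integral,
\[
	A(v(\cdot,\cdot))\in\dot W^{\sigma\beta,\,p/\beta}\bigl([0,T];L^\infty(\Omega)\bigr)\hookrightarrow\dot W^{\tfrac12+\epsilon,\,p}\bigl([0,T];L^\infty(\Omega)\bigr)
\]
for some small $\epsilon>0$, the embedding being valid on the bounded interval since $\sigma\beta>\tfrac12$ and $p/\beta\ge p$, with norm bounded by $[A]_{C^\beta}$ times a power of $R$. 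Here one uses that, by the Kato square root property and the bounded $H^\infty$-calculus established in the proof of Theorem~\ref{thm:mr_elliptic}, the maximal regularity class embeds into $\dot W^{\sigma,p}([0,T];L^\infty(\Omega))$ for all $\sigma<1-\tfrac n{2q}$. This step is exactly where the fractional Sobolev version of the linear theory is indispensable: requiring $A(v(\cdot,\cdot))$ to be Hölder of order $>\tfrac12$ in time would cost a summability factor $\tfrac1p$ and hence force $p$ large, whereas the Sobolev composition loses nothing. With these uniform bounds, Theorem~\ref{thm:mr_elliptic}(b) applies and yields $\norm{\Phi(v)}_{\mathbb{E}}\le C_R\bigl(\norm{f}_{L^p([0,T];L^q(\Omega))}+\norm{u_0}_{(D,L^q)_{1/p,p}}\bigr)$ with $C_R$ depending only on $n,\Omega,q,p,T,\delta,\eta,\epsilon,\norm{A}_{C^\beta}$ and $R$.

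To conclude, fix $R$ and then pick the data bound $C$ so small that, via the compact embedding $\mathbb{E}\hookrightarrow\hookrightarrow\dot W^{\sigma,p}([0,T];L^\infty(\Omega))\cap C([0,T];C^\mu(\overline\Omega))$, the map $\Phi$ sends the closed convex set $B_R$ (with the constraint $v(0,\cdot)=u_0$, which $\Phi$ preserves) into itself; the same compact embedding makes $\Phi$ compact. Continuity of $\Phi$ on $B_R$ follows by the usual compactness-plus-uniqueness argument: if $v_n\to v$ in the weaker norm then $(\Phi(v_n))$ is bounded in $\mathbb{E}$, a subsequence converges weakly in $\mathbb{E}$ and strongly in the weaker space, and passing to the limit in the linear equation (using $A(v_n)\to A(v)$ uniformly and weak convergence of the gradients in $L^p([0,T];L^q(\Omega))$) together with uniqueness of maximal regularity solutions identifies the limit with $\Phi(v)$. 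Schauder's fixed point theorem then produces $u=\Phi(u)\in\mathbb{E}$, a solution of~\eqref{eq:QLP} with $u(t)\in D(-\divv A(u(t,\cdot))\nabla\cdot)$ for a.e.\ $t$ and $-\divv(A(u)\nabla u)\in L^p([0,T];L^q(\Omega))$; the inclusions $u\in\BUC([0,T]\times\overline\Omega)$ and $u\in C^{\alpha-1/p}([0,T];C^{2(1-\alpha)-n/q}(\Omega))$ for $\alpha\in(\tfrac1p,1-\tfrac n{2q})$ are the standard embeddings of the maximal regularity class. The main obstacle is the uniform control of the time regularity of the frozen coefficients just described; the remainder is routine work with interpolation inequalities and the soft Schauder machinery.
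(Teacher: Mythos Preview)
Your proposal is correct and follows essentially the same strategy as the paper: freeze coefficients, apply Theorem~\ref{thm:mr_elliptic} to the linearised problem, and close via Schauder using a compactness-plus-uniqueness continuity argument. The only notable difference is the choice of fixed point space: the paper takes the ball in $W^{\alpha-\delta,p}([0,T];W_0^{2(1-\alpha-\delta),q}(\Omega))$ with $\alpha\in(\tfrac{1}{2\beta},1-\tfrac{n}{2q})$ and small $\delta>0$, whereas you work directly in $\dot W^{\sigma,p}([0,T];L^\infty(\Omega))\cap C([0,T];C^\mu(\overline\Omega))$; both choices support the required compact embedding from the maximal regularity class and the uniform $\VMO$/ellipticity control, and your composition estimate $A\circ v\in\dot W^{\sigma\beta,p/\beta}\hookrightarrow\dot W^{1/2+\epsilon,p}$ is the same computation the paper carries out (written there as $\norm{a_{ij}\circ v}_{\dot W^{r,p}}\lesssim\norm{v}_{\dot W^{r/\beta,\beta p}}$).
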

	\begin{proof}
		Choose $\alpha \in (\frac{1}{2\beta}, 1 - \frac{n}{2q})$, which is possible by our assumptions. Now, choose $\delta > 0$ with $\alpha - \delta > \frac{1}{2\beta}$ and $\alpha + \delta < 1 - \frac{n}{2q}$. Further, let 
		\begin{equation*}
			\mathcal{M} = \{ v \in W^{\alpha-\delta,p}([0,T];W_0^{2(1-\alpha-\delta),q}(\Omega)): v(0) = u_0 \}
		\end{equation*}
		and $\mathcal{M}_R$ for $R > 0$ be the ball $B(0,R)$ in $\mathcal{M}_R$. For $v \in \mathcal{M_R}$ consider the problem
    	\begin{equation*}
    		\label{eq:lp}
    		\tag{LP}
    		\LeftEqNo
    		\left\{
    		\begin{aligned}
    			\frac{\partial}{\partial t} u(t,x) - \divv(A(v(t,x)) \nabla u(t,x)) & = f(t,x) \\
    			u(0) & = u_0.
    		\end{aligned}
    		\right.
    	\end{equation*}
		Since $\alpha + \delta < 1 - \frac{n}{2q}$ and $\alpha - \delta > \frac{1}{2\beta} \ge \frac{1}{2}$, we have $v \in W^{\alpha-\delta,p}([0,T];\BUC(\Omega))$ and $\mathcal{M}$ is compactly embedded in $\BUC([0,T] \times \overline{\Omega})$. By the Arzelà--Ascoli theorem the functions in $\mathcal{M}_R$ are uniformly equicontinuous on $[0,T] \times \Omega$. As a consequence assumption~\ref{thm:mr_elliptic:vmo} of Theorem~\ref{thm:mr_elliptic} is satisfied and one can find uniform ellipticity constants for $A \circ v$ and $v \in \mathcal{M}_R$. For $\epsilon > 0$ with $r = (\alpha - \delta - \epsilon) \beta > \frac{1}{2}$ we have
    	\begin{align*}
    		\MoveEqLeft \norm{a_{ij} \circ v}^p_{\dot{W}^{r,p}([0,T]; L^{\infty}(\Omega))} = \int_0^T \int_0^T \frac{\norm{a_{ij}(v(t,\cdot)) - a_{ij}(v(s,\cdot))}_{\infty}^p}{\abs{t-s}^{1+pr}} \d s \d t \\
    		& \lesssim \int_0^T \int_0^T \frac{\norm{v(t,\cdot) - v(s,\cdot)}_{\infty}^{\beta p}}{\abs{t-s}^{1+pr}} \d s \d t = \norm{v}^{\beta p}_{\dot{W}^{r \beta^{-1}, \beta p}([0,T];L^{\infty}(\Omega))} \\
    		& = \norm{v}^{\beta p}_{\dot{W}^{\alpha - \delta - \epsilon, \beta p}([0,T];L^{\infty}(\Omega))} \lesssim \norm{v}^{\beta p}_{\dot{W}^{\alpha - \delta, p}([0,T];L^{\infty}(\Omega))}.
    	\end{align*}
		This means that the coefficients $A \circ v$ satisfy the assumptions of Theorem~\ref{thm:mr_elliptic}. Hence, \eqref{eq:lp} has a unique solution $u$ and there is $C(R) \ge 0$ independent of $v \in \mathcal{M}_R$ with
		\begin{align*}
			\norm{u}_{W^{1,p}([0,T];L^q(\Omega))} + \norm{\divv A(v) \nabla u}_{L^p([0,T];L^q(\Omega))} \le C (\norm{f}_{L^p([0,T];L^q(\Omega))} + \norm{u_0}).	
		\end{align*}
		Further, by the real interpolation formula for vector-valued Besov spaces~\cite[Corollary~4.3]{Ama00} one has for $\theta \in (\frac{1}{2}, 1-\frac{1}{2q})$ and sufficiently small $\epsilon > 0$ the for $v \in \mathcal{M}_R$ uniform embeddings
		\begin{equation}
			\label{eq:qlp:embedding}
			\begin{split}
				\MoveEqLeft W^{1,p}([0,T];L^q(\Omega)) \cap L^p([0,T];D(-\divv A \circ v \nabla \cdot)) \\
				& \hookrightarrow (L^p([0,T];D(-\divv A \circ v \nabla \cdot)), W^{1,p}([0,T];L^q(\Omega)))_{\theta,p} \\
				& = W^{\theta,p}([0,T];(D(-\divv A \circ v \nabla \cdot), L^q(\Omega))_{\theta,p}) \\
				& \hookrightarrow W^{\theta,p}([0,T];W_0^{2(1-\theta)-\epsilon,q}(\Omega)).
			\end{split}
		\end{equation}
		For the last step we may assume that the operators have uniformly bounded imaginary powers. In fact, this is true for a suitable shift, which does not influence the validity of the estimate, by the proof of Theorem~\ref{thm:mr_elliptic}. It then follows from the reiteration theorem for the real interpolation method~\cite[1.10.3, Theorem~2]{Tri78}, the identification of fractional domain spaces with complex interpolation spaces assuming bounded imaginary powers and the Kato square root estimate of Theorem~\ref{thm:kato} that
		\begin{align*}
			\MoveEqLeft (D(-\divv A \circ v \nabla \cdot), L^q(\Omega))_{\theta,p} = ([D(-\divv A \circ v \nabla \cdot), L^q(\Omega)]_{\frac{1}{2}}, L^q(\Omega))_{2\theta-1,p} \\
			& = (D((-\divv A \circ v \nabla \cdot)^{\frac{1}{2}}),L^q(\Omega))_{2\theta-1,p} = (W_0^{1,q}(\Omega),L^q(\Omega))_{2\theta-1,p} \\
			& = B_{0,p}^{2(1-\theta),q}(\Omega) \hookrightarrow W_0^{2(1-\theta)-\epsilon,q}(\Omega).
		\end{align*}
		All estimates hold uniformly for $v \in \mathcal{M}_R$, for the reiteration theorem one checks this with the help of~\cite[1.10.3, Theorem 1]{Tri78}. The embedding~\eqref{eq:qlp:embedding} implies that for small $\norm{f} + \norm{u_0}$ we obtain a well-defind map
    	\begin{align*}
    		S_R\colon \mathcal{M}_R & \to \mathcal{M}_R \\
    		v & \mapsto u_{}, \text{ where } u \text{ is the solution of~\eqref{eq:lp}}.
    	\end{align*}
    	It follows from~\eqref{eq:qlp:embedding} and the compact embedding results for vector-valued Sobolev spaces~\cite[Theorem~5.1]{Ama00} that $S_R \mathcal{M}_R$ is a precompact subset of $\mathcal{M}_R$. We next show that $S_R$ is continuous. For this let $v_n \to v$ in $\mathcal{M}_R$ and let $u_n = S_R v_n$. After passing to a subsequence we may assume that $v_n \to v$ in $\BUC([0,T] \times \Omega)$ and that $u_n$ converges weakly to some $u$ in
	\begin{equation*}
		W^{1,p}([0,T];L^q(\Omega)) \cap L^p([0,T];W_0^{1,q}(\Omega)).
	\end{equation*}
	Now, let $g \in L^{p'}([0,T];W_0^{1,q'}(\Omega))$. Note that $A^T(v_n) \nabla g \to A^T(v) \nabla g$ in $L^{q'}(\Omega)$ by the dominated convergence theorem. Since $u_n$ solves~\eqref{eq:lp} we have
    	\begin{align*}
    	 	0 & = \int_0^T \langle \dot{u}_n(t), g(t) \rangle \d t + \int_0^T \langle A(v_n(t)) \nabla u_n(t), \nabla g(t) \rangle \d t \\
    		& = \int_0^T \langle \dot{u}_n(t), g(t) \rangle \d t + \int_0^T \langle \nabla u_n(t), A^T(v_n(t)) \nabla g(t) \rangle \d t.
    	\end{align*}
    		Taking limits on both sides of the equation, we get
    	\begin{align*}
    		0 & = \int_0^T \langle \dot{u}(t), g(t) \rangle \d t + \int_0^T \langle A(v(t)) \nabla u(t), \nabla g(t) \rangle \d t
		\end{align*}
    	Since $g$ is arbitrary and $u_0 = u_n(0) \to u(0)$, this implies that $u$ solves~\eqref{eq:lp} on $W^{-1,q}(\Omega)$, i.e.\ is the unique integrated solution of~\eqref{eq:lp} given by Proposition~\ref{prop:wnacp}. Hence, $S_R v = u$. Since the same argument works for arbitrary subsequences, we have shown that $S$ is continuous. Now, by Schauder's fixed point there is some $u \in \mathcal{M}_R$ with $S_R u = u$. Using Theorem~\ref{thm:mr_elliptic} for $v = u$ we see that
		\begin{align*}
			\MoveEqLeft \norm{u}_{W^{1,p}([0,T];L^q(\Omega))} + \norm{\divv A(u) \nabla u}_{L^p([0,T];L^q(\Omega))} \\
			&  \le C (\norm{f}_{L^p([0,T];L^q(\Omega))} + \norm{u_0}_{(D(-\divv A(u_0) \nabla \cdot), L^q(\Omega))_{\frac{1}{p},p}}). \qedhere
		\end{align*}
	\end{proof}
	
	\begin{remark}
		We can only deduce the existence of solutions for small data because the constant in the maximal regularity estimate in Theorem~\ref{thm:mr_elliptic} depends on the $\VMO$-modulus of the coefficients and their fractional Sobolev norm. If one finds estimates on solutions of~\eqref{eq:QLP} independent of these regularity data, the Leray--Schauder principle would yield solutions for arbitrary $f$ and $u_0$.
		
		Further note that the above argument works for a far more general class of problems. For example, the coefficients $A(u)$ may depend in a non-local way on $u$, e.g.\ on the history of the solution, as in~\cite{Ama05} and~\cite{Ama06}.
	\end{remark}
		
\section{Optimality of the Results}\label{sec:optimality}

	In this section we will see that the maximal regularity results obtained in Theorem~\ref{thm:mr} are optimal or close to optimal. In fact, even in the form setting considered in Corollary~\ref{cor:form} maximal regularity may fail if one losens the assumed regularity, i.e.\ $\mathcal{A} \in \dot{W}^{\alpha,p}([0,T];\mathcal{B}(V,V'))$ for some $\alpha > \frac{1}{2}$ for maximal $L^p$-regularity. It was shown in~\cite[Theorem~5.1]{Fac16c} that there is a symmetric non-autonomous form with $\mathcal{A} \in C^{1/2}([0,T];\mathcal{B}(V,V'))$ and $f \in L^{\infty}([0,T];V)$ for which the unique solution given by Theorem~\ref{prop:wnacp} satisfies $\dot{u}(t) \not\in H$ for almost all $t \in [0,T]$ although $u \in L^{\infty}([0,T];V)$ holds. As a consequence maximal $L^p$-regularity fails for all $p \in [1, \infty]$. Note that $C^{1/2}([0,T];\mathcal{B}(V,V')) \hookrightarrow \dot{W}^{\alpha,q}([0,T];\mathcal{B}(V,V'))$ for all $\alpha \in (0,\frac{1}{2})$ and all $q \in [1, \infty]$. Hence, Theorem~\ref{thm:mr} fails for $\alpha < \frac{1}{2}$ in all possible variants.
	
	This leaves open the case $\alpha = \frac{1}{2}$ which is critical by the above results. Note that for $q \in (1,2)$ the space $\dot{W}^{1/2,q}([0,T];\mathcal{B}(V,V'))$ contains piecewise constant forms. Hence, as observed by Dier~\cite[Section~5.2]{Die14}, the failure of the Kato square root property for general forms implies that maximal $L^2$-regularity may not hold for $q < 2$. The example~\cite[Example~7.2]{Fac16d} shows that for $p > 2$ maximal $L^p$-regularity in the range $q \in (1, 2)$ does not even hold for elliptic operators. Note that for $p \in (1,2)$ these arguments based on the imcompability of trace spaces break down.
	
	Refining the arguments in~\cite{Fac16c}, we show that for symmetric forms maximal $L^p$-regularity may fail for $p \in [1, \infty]$ and $\mathcal{A} \in \dot{W}^{1/2,q}([0,T];\mathcal{B}(V,V'))$ for some $q > 2$.
	
	\begin{example}
		We take $H = L^2([0,1/2])$ and $V = L^2([0,1], w)$ with $w(x) = (x \abs{\log x})^{-3/2}$. Further, we consider $u(t,x) = c(x) (\sin(t \phi(x)) + d)$ for $\phi(x) = w(x)$, $c(x) = x \cdot \abs{\log x}$ and some sufficiently large $d > 0$. Note that for all $t \in [0,T]$
		\begin{equation*}
			\norm{\dot{u}(t)}_H^2 \simeq \int_0^{1/2} \abs{c(x) \phi(x)}^2 \d x = \int_0^{1/2} x^{-1} \frac{1}{\abs{\log x}} \d x = \infty.
		\end{equation*}
		Hence, $\dot{u}(t) \not\in H$ for all $t \in [0,T]$. Following the ideas and arguments in \cite{Fac16c} we now show that $u$ is indeed an integrated solution of a non-autonomous problem associated to some bounded coercive symmetric sesquilinear form $a\colon [0,T] \times V \times V \to \IC$ and the inhomogeneous part $f(t) = u(t) \in L^{\infty}([0,T];V)$. For this one can use the same extension procedure as in~\cite[Section~4]{Fac16c}. Following~\cite[Section~5]{Fac16c} it then remains to check the regularity of the extended forms. Since $\dot{W}^{\alpha,p} \cap L^{\infty}$ is an algebra under pointwise multiplication, the regularity question boils down to the regularity of the mapping $u\colon [0,T] \mapsto V$. 
		
		We now show explicitly that $u \in \dot{W}^{1/2,q}([0,T];V)$ for all $q \in (2, \infty)$. Note that one the one hand
		\begin{equation}
			\label{eq:sin_estimate}
			\abs{\sin(t\phi(x)) - \sin(s\phi(x))}^2 \le \abs{t-s}^2 \phi^2(x) = \abs{t-s}^2 x^{-3} \abs{\log x}^{-3}.
		\end{equation}
		On the other hand the left hand side can clearly be estimated by $4$. Now, let $\psi(x) = 2x^{3/2} \abs{\log x}^{3/2}$. Then~\eqref{eq:sin_estimate} gives the sharper estimate if and only if $\abs{t-s} \le \psi(x)$ or equivalently $x \ge \psi^{-1}(r)$. Splitting the fractional norm, we obtain
		\begin{equation}
			\label{eq:frac_estimate}
			\begin{split}
    			& \biggl( \int_0^{T} \int_0^{T} \frac{\norm{u(t) - u(s)}^q_V}{\normalabs{t-s}^{1+\frac{q}{2}}} \d t \d s \biggr)^{1/q} \\
				& = \biggl( \int_0^{T} \int_{-t}^{T-t} \frac{\norm{u(t) - u(t+r)}_V^q}{\normalabs{r}^{1+\frac{q}{2}}} \d r \d t \biggr)^{1/q} \\
    			& \lesssim \biggl( \int_0^T \int_{-t}^{T-t} \biggl( \int_0^{\psi^{-1}(\abs{r})} x^{1/2} \abs{\log x}^{1/2} \d x \biggr)^{q/2} \frac{\d r}{\normalabs{r}^{1+\frac{q}{2}}} \d t \biggr)^{1/q} \\
    			& \qquad + \biggl( \int_0^T \int_{-t}^{T-t} \biggl( \int_{\psi^{-1}(\abs{r})}^{1/2} x^{-5/2} \abs{\log x}^{-5/2} \d x \biggr)^{q/2} \frac{\d r}{\normalabs{r}^{1-\frac{q}{2}}} \d t \biggr)^{1/q}.
			\end{split}
		\end{equation}
		Now, for the innermost integral of the first term we have for $F(x) = x^{3/2} \abs{\log x}^{1/2}$
		\begin{align*}
			\MoveEqLeft \int_0^{\psi^{-1}(\abs{r})} x^{1/2} \abs{\log x}^{1/2} \d x \lesssim \int_0^{\psi^{-1}(\abs{r})} F'(x) \d x = F(\psi^{-1}(\abs{r})) \\
			& \lesssim \psi(\psi^{-1}(\abs{r})) \normalabs{\log \psi^{-1}(\abs{r})}^{-1} = \abs{r} \normalabs{\log \psi^{-1}(\abs{r})}^{-1} \lesssim \abs{r} \abs{\log r}^{-1}.
		\end{align*}
		Analogously, for the second term we have  for $F(x) = -x^{-3/2} \abs{\log x}^{-5/2}$
		\begin{align*}
			\MoveEqLeft \int_{\psi^{-1}(\abs{r})}^{1/2} x^{-5/2} \abs{\log x}^{-5/2} \d x \lesssim \int_{\psi^{-1}(\abs{r})}^{1/2} F'(x) \d x \le - F(\psi^{-1}(\abs{r})) \\
			& \lesssim \frac{1}{\psi(\psi^{-1}(\abs{r}))} \normalabs{\log \psi^{-1}(\abs{r})}^{-1} \lesssim \abs{r}^{-1} \abs{\log r}^{-1}.
		\end{align*}
		Hence, \eqref{eq:frac_estimate} is dominated up to a constant by the for $q > 2$ finite expression
		\begin{align*}
			\biggl( \int_{0}^{T} \abs{\log r}^{-q/2} \frac{\d r}{\normalabs{r}} \biggr)^{1/q}.
		\end{align*}
	\end{example}
	
	Hence, for maximal $L^2$-regularity of forms the only case left open is that of $\dot{W}^{1/2,2}([0,T];\mathcal{B}(V,V'))$ regularity which we are not able to answer at the moment. Note that there is also a positive result assuming some half differentiability. Namely, it was shown by Auscher and Egert~\cite{AusEge16} that for elliptic operators one has maximal $L^2$-regularity if the coefficients $a_{ij}$ satisfy $\partial^{1/2} a_{ij} \in \BMO$. This in particular implies $a_{ij} \in \dot{H}^{1/2,q}$ for all $q \in (1, \infty)$, which in turn implies $a_{ij} \in \dot{W}^{1/2,q}$ for all $q \ge 2$, which in general is not sufficient for maximal $L^p$-regularity by the above example. In the other direction the inclusion $\dot{W}^{1/2,q} \hookrightarrow \dot{H}^{1/2,q}$ does only hold for $q \in (1, 2]$. Hence, for $q \in (1,2)$ the space $\dot{H}^{1/2,q}$ contains step functions. Note that in the critical case one has $\dot{H}^{1/2,2} = \dot{W}^{1/2,2}$, i.e.\ the Besov and the Bessel scale give rise to the same problem.
						
\emergencystretch=0.75em
\printbibliography

\end{document}